\journal{Journal of Computational Physics}
\def\ps@pprintTitle{%
 \let\@oddhead\@empty
 \let\@evenhead\@empty
 \def\@oddfoot{}%
 \let\@evenfoot\@oddfoot}
\DeclareMathOperator{\diag}{diag}
\newtheorem{thrm}{Theorem}
\newtheorem{remark}{Remark}
\newtheorem{lemma}{Lemma}
\newcommand{\Tr}{\ensuremath{^{\mr{T}}}}
\newcommand{\mr}[1]{\ensuremath{\mathrm{#1}}}
\newcommand{\fnc}[1]{\ensuremath{\mathcal{#1}}}
\newcommand{\etal}[0]{{\em et~al.\@}\xspace}
\newcommand{\ie}[0]{{i.e.\@}\xspace}
\newcommand{\xil}[0]{\ensuremath{\xi_{l}}}
\newcommand{\matAlmtwol}[1]{\ensuremath{\left[\fnc{J}\frac{\partial\xil}{\partial x_{m}}\right]_{2l}}}
\newcommand{\matAlmtwolmone}[1]{\ensuremath{\left[\fnc{J}\frac{\partial\xil}{\partial x_{m}}\right]_{(2l-1)}}}
\newcommand{\matAlmtwoltilde}[1]{\ensuremath{\widetilde{\left[\fnc{J}\frac{\partial\xil}{\partial x_{m}}\right]}_{2l}}}
\newcommand{\matAlmtwolmonetilde}[1]{\ensuremath{\widetilde{\left[\fnc{J}\frac{\partial\xil}{\partial x_{m}}\right]}_{(2l-1)}}}
\def\ps@pprintTitle{%
 \let\@oddhead\@empty
 \let\@evenhead\@empty
 \def\@oddfoot{}%
 \let\@evenfoot\@oddfoot}
\definecolor{newcolor}{rgb}{.8,.349,.1}
\begin{document}


\begin{frontmatter}

\title{Provably Stable Flux Reconstruction High-Order Methods on Curvilinear Elements}



\author[1]{Alexander {Cicchino}\corref{cor1}\fnref{CICC}}
\cortext[cor1]{Corresponding author. 
}
  \ead{alexander.cicchino@mail.mcgill.ca}
  \fntext[CICC]{Ph.D. Student}
  
\author[2]{David C. {Del Rey Fern\'andez}\fnref{DCDRF}} 
\ead{dcdelrey@gmail.com}
\fntext[DCDRF]{Assistant Professor}
  
\author[1]{Siva {Nadarajah}\fnref{Nadarajah}}
\fntext[Nadarajah]{Associate Professor}

\ead{siva.nadarajah@mcgill.ca}

\author[3]{Jesse {Chan}\fnref{Chan}}
\fntext[Chan]{Assistant Professor}
\author[4]{Mark H. {Carpenter}\fnref{MHC}}
\fntext[MHC]{Senior Research Scientist}

\address[1]{Department of Mechanical Engineering, McGill University, 
Montreal, QC, H3A 0C3, Canada}
\address[2]{Department of Applied Mathematics, University of Waterloo, 
Waterloo, ON, N2L, Canada}
\address[3]{Department of Computational and Applied Mathematics, Rice University, 
Houston, TX, 77005, USA}
\address[4]{Computational AeroSciences Branch, NASA Langley Research Center (LaRC), 
Hampton, VA, 23666, USA}


\tnotetext[tnote1]{Preprint Submitted to the Journal of Computational Physics}




\begin{abstract}
Provably stable flux reconstruction (FR) schemes are derived for partial differential equations cast in curvilinear coordinates. Specifically, energy stable flux reconstruction (ESFR) schemes are considered as they allow for design flexibility as well as stability proofs for the linear advection problem on affine elements. Additionally, split forms are examined as they enable the development of energy stability proofs. The first critical step proves, that in curvilinear coordinates, the discontinuous Galerkin (DG) conservative and non-conservative forms are inherently different--even under exact integration and analytically exact metric terms. This analysis demonstrates that the split form is essential to developing provably stable DG schemes on curvilinear coordinates and motivates the construction of metric dependent ESFR correction functions in each element. Furthermore, the provably stable FR schemes differ from schemes in the literature that only apply the ESFR correction functions to surface terms or on the conservative form, and instead incorporate the ESFR correction functions on the full split form of the equations. It is demonstrated that the scheme is divergent when the correction functions are only used for surface reconstruction in curvilinear coordinates. We numerically verify the stability claims for our proposed FR split forms and compare them to ESFR schemes in the literature. Lastly, the newly proposed provably stable FR schemes are shown to obtain optimal orders of convergence. The scheme loses the orders of accuracy at the equivalent correction parameter value $c$ as that of the one-dimensional ESFR scheme. 
\end{abstract}
\begin{keyword}{Keywords:
High-order\sep\newline
Flux reconstruction\sep\newline
Discontinuous Galerkin\sep\newline
Summation-by-Parts}
\end{keyword}
\end{frontmatter}

\section{Introduction}

The Flux Reconstruction (FR) framework, originally proposed by Huynh~\cite{huynh_flux_2007} 
(also referred to as lifting collocation penalty~\cite{wang2009unifying} or correction procedure via reconstruction~\cite{huynh2014high}), 
has emerged as a popular FEM approach that is both simple as it can be cast in a differential collocated form and affords design flexibility, where through a choice of the correction functions, the properties of the scheme can be altered. Importantly, subsets of FR schemes 
have been identified as provably linearly stable (see Refs.~\cite{vincent_insights_2011,williams_energy_2014,vincent_extended_2015,castonguay_phd,castonguay_new_2012}) also known as Vincent-Castonguay-Jameson-Huynh (VCJH) schemes or Energy Stable Flux Reconstruction (ESFR). Unfortunately, these proofs are limited to affine elements and hence do not apply to general curvilinear meshes.

A discretization agnostic approach for the design and analysis of arbitrarily high-order and provably stable numerical methods for linear variable coefficient problems is provided by the summation-by-parts (SBP)
framework~\cite{fernandez2014review,svard2014review,fernandez2014generalized}. SBP operators are matrix difference operators {\color{black}that are mimetic} 
to high-order 
integration by parts and when combined with appropriate interface coupling procedures, for example simultaneous approximation terms (SATs)~\cite{carpenter1994time,carpenter1999stable,nordstrom1999boundary,nordstrom2001high,carpenter2010revisiting,svard2014entropy,parsani2015entropy,parsani2015entropyWall}, lead to provably stable and conservative methods. 
FR has been cast in SBP form~\cite{ranocha2016summation,ranocha2017extended,montoya2021unifying} as well in residual distribution schemes~\cite{abgrall2019reinterpretation,abgrall2018general,abgrall2018connection,abgrall2021analysis} paving the way for a common framework to analyze high-order schemes. 
{\color{black} Moreover, discretizations having the SBP property form the foundations for nonlinearly stable schemes for nonlinear conservation laws}
~\cite{fisher2012high,fisher2013discretely,fisher2013high,carpenter2014entropy,parsani2015entropy,parsani2015entropyWall,parsani2016entropy,carpenter2016towards,yamaleev2017family,crean2018entropy,chen2017entropy,Crean2019Staggered,chan2018discretely,FriedrichEntropy2020}.

The focus of this article is on the construction of provably stable flux reconstruction schemes in curvilinear coordinates. Since the publication by Sv\"ard~\cite{svard2004coordinate}, the extension of stability proofs for dense-norm SBP operators, to variable coefficient problems---particularly curvilinear coordinate transformations, has received little attention in the SBP literature.
Sv\"ard~\cite{svard2004coordinate} proved that 
when dense-norms, $\bm{M}$, are multiplied against a diagonal matrix containing the metric Jacobian on the mesh nodes, $\bm{J}$, the result is not a norm, \ie, $\bm{M}\bm{J}$ is not in general a norm, and therefore provable stability is lost.
However, by recasting dense-norm SBP operators in staggered form and constructing metrics on the staggered grid, stability can be recovered for partial differential equations (PDE) in curvilinear coordinates discretized using dense-norm SBP operators~\cite{Fernandez2019curvitensor}. Alternatively, Ranocha~\etal~\cite{ranocha2017extended} demonstrated in one-dimension, that for modal based operators 
the issue with dense-norms can be overcome by using a dense matrix, $\Tilde{\bm{J}}$, such that 
$\bm{M}\Tilde{\bm{J}}=\left(\bm{M}\Tilde{\bm{J}}\right)\Tr$.
In a somewhat analogous way, the extension of stability proofs of ESFR schemes to curvilinear coordinates has been unclear since the ESFR norm is dense.
In this paper, taking inspiration from the developments in the SBP literature and starting from the variational form, we demonstrate how to incorporate metric Jacobian dependence in dense norms, specifically those arising in ESFR schemes.
In variational form, it is immediately seen that including metric Jacobian dependence does not merely correspond to right multiplying the norm matrix, but instead having the determinant of the Jacobian embedded within the integral; since the metric Jacobian is always built on the quadrature nodes and arises in the integral by transforming from the physical to the reference domain. This allows us to formulate the metric Jacobian dependent ESFR filter and the metric dependent ESFR 
correction functions. 

The overarching objective of this paper is to develop provably stable FR discretizations on curvilinear coordinates {\color{black}for systems of partial differential equations}.
As highlighted by the SBP community~\cite{aalund2019encapsulated,wintermeyer2017entropy,Fernandez2019curvitensor}, discrete integration by parts is not satisfied in the physical space for curvilinear coordinates. 
This is due to the physical flux never explicitly being represented by an interpolating polynomial in the physical space~\cite{moxey2019interpolation}.
This distinction, to the authors' knowledge, has not been investigated within the ESFR and DG communities~\cite{mengaldo2016connections,NDG,karniadakis2013spectral,zwanenburg_equivalence_2016}. 
The DG strong form in reference space can be derived by either an application of integration by parts on the DG weak form in reference space or in physical space. Since the two strong DG discretizations are not  equivalent~\cite{teukolsky2016formulation}, we present the split form in order to mimic integration by parts in the physical space. A critical result that has not been shown in the SBP literature~\cite{Fernandez2019curvitensor,chan2019discretely,aalund2019encapsulated}, is that the two DG strong forms are not equivalent even under exact integration and analytically exact metric terms, {\color{black}making} the split form essential for curvilinear high-order schemes.

{\color{black}In this article, we derive provably stable FR schemes on curvilinear coordinates and consider various design decisions: modal or nodal basis, uncollocated integration, different ESFR correction functions, and different volume and surface quadrature nodes.} 
The first main insight is that the ESFR stability condition~\cite{vincent_new_2011,castonguay2012newTRI,williams_energy_2014,Cicchino2020NewNorm} must contain metric dependence in curvilinear coordinates.
Then, we demonstrate that stability cannot be achieved when the ESFR correction functions are solely used to reconstruct the flux on the surface. This issue has been presented on linear grids for Burgers' equation by Ranocha~\etal~\cite{ranocha2016summation} and for Euler's equations by Abe~\etal~\cite{abe2018stable}, although neither have found a solution to satisfy stability for general ESFR in split form. In~\cite{ranocha2016summation}, the authors investigated the issue of the dense ESFR contribution to the split forms, where they proved stability only for the DG case. In~\cite{abe2018stable}, the authors' numerically demonstrated stability for the \enquote{$\text{g}_2$ lumped-Lobatto} ESFR scheme, which is equivalent to a collocated DG scheme on Gauss-Lobatto-Legendre nodes~\cite{de2014connections} and previously shown to be stable in split form by Gassner~\cite{gassner2016split}. Following the general nonlinearly stable FR framework developed in Cicchino \textit{et al.}~\cite{CicchinoNonlinearlyStableFluxReconstruction2021} {\color{black}for Burgers' equation}, the ESFR filter/divergence of the correction functions is incorporated on the nonlinear volume terms to ensure nonlinear stability within the broken Sobolev-norm. {\color{black} This differs from the literature where the ESFR correction functions were only used to reconstruct the flux on the surface~\cite{huynh_flux_2007,wang2009unifying,vincent_new_2011,jameson_non-linear_2012,castonguay_energy_2013,huynh2014high,williams_energy_2014,sheshadri2016stability,abe2018stable}. 
In addition, the proposed scheme is in contrast from schemes where the ESFR norm\footnote{By ESFR norm, we refer to the $(\bm{M}+\bm{K})$ modified Mass matrix form in Allaneau and Jameson~\cite[Eq.(13)]{allaneau_connections_2011}} was applied to the conservative discretization; either filtering the strong form surface integral~\cite{allaneau_connections_2011,zwanenburg_equivalence_2016,ranocha2016summation,Cicchino2020NewNorm}, or filtering the entire weak form~\cite{montoya2021unifying}; since such stated schemes are only linearly stable.
}

The remainder of this article is organized as follows: In Section 2, we introduce the mathematical notations, definitions of metrics, and establish the relationships between the physical and reference spaces. In Section 3, the DG scheme is derived in both conservative and non-conservative strong forms. We subsequently prove that the two forms are inherently different under exact integration and metric terms, and introduce the DG split form. In Section 4, the classical ESFR scheme is established, and the proposed novel nonlinearly stable FR scheme is derived. In subsequent Sections 5 and 6, we provide proofs of the free-stream preservation, local and global conservation, and stability of the proposed stable FR split form. The theoretical results are numerically verified in Sec.~\ref{sec:Results}, where the classical ESFR scheme in split form (ESFR filter only applied to the facet terms) diverges while, our proposed ESFR split form (ESFR filter applied to facet and volume terms) remains stable and maintains the correct orders of accuracy.

\section{Math Notation and Definitions}\label{sec: Math notation}

Consider the scalar 3D conservation law,
\begin{equation}
\begin{aligned}
    \frac{\partial}{\partial t}u(\bm{x}^c,t) +\nabla\cdot\bm{f}(u(\bm{x}^c,t) )=0,\: t\geq 0,\:\bm{x}^c\coloneqq[x \text{ }y\text{ }z]\in\Omega,\\
   u(\bm{x}^c,0)=u_0(\bm{x}^c),
\end{aligned}
    \nonumber
\end{equation}

\noindent where $\bm{f}(u(\bm{x}^c,t) )\in\mathbb{R}^{1\times d}$ stores the fluxes in each of the $d$ directions, and the superscript $c$ refers to Cartesian coordinates. In this paper row vector notation will be used. 
The computational domain $\bm{\Omega}^h$ is partitioned into $M$ non-overlapping elements, $\bm{\Omega}_m$, where the domain is represented by the union of the elements, \textit{i.e.}
\begin{equation}
    \bm{\Omega} \simeq \bm{\Omega}^h \coloneqq \bigcup_{m=1}^M \bm{\Omega}_m.
    \nonumber
\end{equation}
Each element $m$ has a surface denoted by $\bm{\Gamma}_m$. The global approximation, $u^h(\bm{x}^c,t)$, is constructed from the direct sum of each local approximation, $u_m^h(\bm{x}^c,t)$, \textit{i.e.} 
\begin{equation}
    u(\bm{x}^c,t) \simeq u^h(\bm{x}^c,t)=\bigoplus_{m=1}^M u_m^h(\bm{x}^c,t).
    \nonumber
\end{equation}
Throughout this paper, all quantities with a subscript $m$ are specifically unique to the element $m$.
{\color{black}On each element, we represent the solution with $N_p$ linearly independent modal or nodal basis of a maximum order of $p$; where, $N_p\coloneqq(p+1)^d$. 
The solution representation is},
$
     u_m^h(\bm{x}^c,t) \coloneqq  \sum_{i=1}^{N_p} {\chi}_{m,i}(\bm{x}^c)\hat{u}_{m,i}(t).
    \nonumber
$ 
The elementwise residual is,
\begin{equation}
     R_m^h(\bm{x}^c,t)=\frac{\partial}{\partial t}u_m^h(\bm{x}^c,t) + \nabla \cdot \bm{f}(u_m^h(\bm{x}^c,t)).\label{eq: residual}
\end{equation}

\noindent The basis functions in each element are defined as,
\begin{equation}
     \bm{\chi}(\bm{x}^c) \coloneqq [\chi_{1}(\bm{x}^c)\text{, }\chi_{2}(\bm{x}^c)\text{, } \dots\text{, } \chi_{N_p}(\bm{x}^c)] = \bm{\chi}({x})\otimes \bm{\chi}({y}) \otimes \bm{\chi}({z}) \in \mathbb{R}^{1\times N_p},
\end{equation}

\noindent where $\otimes$ is the tensor product.

The physical coordinates are mapped to the reference element $ \bm{\xi}^r \coloneqq \{ [\xi \text{, } \eta \text{, }\zeta]:-1\leq \xi,\eta,\zeta\leq1 \}$ by

\begin{equation}
      \bm{x}_m^c( \bm{\xi}^r)\coloneqq  \bm{\Theta}_m( \bm{\xi}^r)=
     \sum_{i=1}^{N_{t,m}} {\Theta}_{m,i}( \bm{\xi}^r)\hat{ \bm{x}}_{m,i}^c,
\end{equation}
where $ {\Theta}_{m,i}$ 
are the mapping shape functions of the $N_{t,m}$ physical mapping control points $\hat{\bm{x}}_{m ,i}^c $.

To transform Eq.~(\ref{eq: residual}) to the reference basis, {\color{black}as in refs}~\cite{gassner2018br1,manzanero2019entropy,kopriva2006metric,thomas1979geometric,vinokur2002extension}, we introduce the physical 

\begin{equation}
    \bm{a}_j \coloneqq \frac{\partial \bm{x}^c}{\partial \xi^j},\text{ } j=1,2,3
    \nonumber
\end{equation}

\noindent and reference 

\begin{equation}
    \bm{a}^j \coloneqq \nabla\xi^j,\text{ } j=1,2,3
    \nonumber
\end{equation}

\noindent vector bases. We then introduce the determinant of the metric Jacobian as

\begin{equation}
  J^\Omega \coloneqq  |\bm{J}^\Omega| =  \bm{a}_1\cdot (\bm{a}_2\times \bm{a}_3),
\end{equation}

\noindent and the metric Jacobian cofactor matrix as ~\cite{gassner2018br1,zwanenburg_equivalence_2016,manzanero2019entropy,kopriva2019free},

\begin{equation}
    \bm{C}^T \coloneqq J^\Omega(\bm{J}^\Omega)^{-1}=\begin{bmatrix}J^\Omega\bm{a}^1\\
    J^\Omega\bm{a}^2\\
    J^\Omega\bm{a}^3 \end{bmatrix}
    = \begin{bmatrix}
    J^\Omega\bm{a}^\xi\\
    J^\Omega\bm{a}^\eta\\
    J^\Omega\bm{a}^\zeta
    \end{bmatrix}.
\end{equation}

The metric cofactor matrix is formulated by the \enquote{conservative curl} form from~\cite[Eq. 36]{kopriva2006metric} {\color{black}so }as to discretely satisfy the Geometric Conservation Law (GCL) 

\begin{equation}
   \sum_{i=1}^{3}\frac{\partial (J^\Omega (\bm{a}^i)_n)}{\partial \xi^i}=0\text{, }n=1,2,3 \Leftrightarrow \sum_{i=1}^{3}\frac{\partial}{\partial \xi^i}
    (\bm{C})_{ni}=0\text{, }n=1,2,3
    \Leftrightarrow \nabla^r\cdot(\bm{C})=\bm{0},
    \label{eq: GCL}
\end{equation}

\noindent which is detailed in Sec.~\ref{sec: Discrete GCL} {\color{black}for a fixed mesh}, where $(\; )_{ni}$ represents the $n^{\text{th}}$ row, $i^{\text{th}}$ column component of a matrix.

{\color{black}Having established the transformations mapping the physical to the reference coordinates on each element, the differential volume and surface elements can be defined as,}

\begin{equation}
    d\bm{\Omega}_m = J_m^\Omega d\bm{\Omega}_r\text{, similarly }d\bm{\Gamma}_m = J_m^\Gamma d\bm{\Gamma}_r.\label{eq: diff element def}
\end{equation}

\noindent The reference flux for each element $m$ is defined as

\begin{equation}
    \bm{f}_m^r = \bm{C}_m^T\cdot \bm{f}_m 
   \Leftrightarrow \bm{f}_{m,j}^r =\sum_{i=1}^d (\bm{C}_{m}^T)_{ji}\bm{f}_{m,i} \Leftrightarrow \bm{f}_m^r
   =\bm{f}_{m}\bm{C}_m,
   \label{eq: contravariant flux def}
\end{equation}

\noindent where the dot product notation for tensor-vector operations is introduced. The relationship between the physical and reference unit normals is given as~\cite[Appendix B.2]{zwanenburg_equivalence_2016},

\begin{equation}
    \hat{\bm{n}}_m = \frac{1}{J_m^\Gamma}\bm{C}_m\cdot \hat{\bm{n}}^r 
    =\frac{1}{J_m^\Gamma} \hat{\bm{n}}^r \bm{C}_m^T,\label{eq: normals def}
\end{equation}

\noindent for a water-tight mesh. 
 Additionally, the definition of the divergence operator derived from divergence theorem in curvilinear coordinates can be expressed as~\cite[Eq. (2.22) and (2.26)]{gassner2018br1},

\begin{equation}
    \nabla\cdot \bm{f}_m 
    =\frac{1}{J_m^\Omega} \nabla^r \cdot\Big( \bm{f}_m\bm{C}_m \Big)= \frac{1}{J_m^\Omega} \nabla^r \cdot \bm{f}_m^r,\label{eq: divergence def}
\end{equation}

\noindent and the gradient of a scalar as~\cite[Eq. (2.21)]{gassner2018br1},

\begin{equation}
    \nabla {\chi} = \frac{1}{J_m^\Omega} \bm{C}_m \cdot \nabla^r \chi = \frac{1}{J_m^\Omega} \Big(\nabla^r \chi \Big) \bm{C}_m^T.\label{eq: gradient def}
\end{equation}


Thus, substituting Eq.~(\ref{eq: divergence def}) into Eq.~(\ref{eq: residual}), the reference elementwise residual is,

\begin{equation}
    R_m^{h,r}(\bm{\xi}^r,t)\coloneqq R_m^{h}(\bm{\Theta}_m(\bm{\xi}^r),t)= \frac{\partial}{\partial t}u_m^h(\bm{\Theta}_m(\bm{\xi}^r),t) + \frac{1}{J_m^\Omega}\nabla^r \cdot \bm{f}^r(u_m^h(\bm{\Theta}_m(\bm{\xi}^r),t)).\label{eq: residual reference}
\end{equation}
\section{Discontinuous Galerkin}

In this section we present a {\color{black}provably} stable DG discretization for curvilinear coordinates~\cite{Fernandez2019curvitensor,chan2019discretely,aalund2019encapsulated} to act as the cornerstone for our {\color{black}provably} stable FR schemes. We derive the DG strong form for both \enquote{conservative} and \enquote{non-conservative} formulations, and prove that they are inherently different for curvilinear coordinates; even with analytically exact metric terms and exact integration. This difference necessitates a split form to ensure nonlinear stability on curvilinear coordinates. Specifically we cover:

\begin{enumerate}
    \item Deriving the conservative DG strong form by transforming the physical DG weak form to reference space. Then, projecting the reference flux onto the reference polynomial basis, and finally, integrating the volume terms by parts in the reference space.
    \item Deriving the non-conservative DG strong form by projecting the physical flux onto a physical basis in the physical DG weak form. Then integrating the volume terms by parts in the physical space, and finally, transforming the physical DG non-conservative strong form to the reference space.
    \item Comparing the two forms, prove that they are inherently different, even under exact integration with analytically exact metric terms, and that discrete integration by parts in the physical space is not satisfied for either form. Then, combining the two forms to discretely \enquote{mimic} integration by parts in the physical space.
\end{enumerate}

\subsection{DG - Conservative Strong Form}\label{sec: DG Var}



In a Galerkin framework, we left multiply the physical residual Eq.~(\ref{eq: residual}) by an orthogonal test function. Choosing the test function to be the same as the basis function, integrating in physical space, and applying integration by parts in physical space, we arrive at the weak form, 

\begin{equation}
\begin{split}
    \int_{\bm{\Omega}_m} {\chi}_{m,i}(\bm{x}^c)\frac{\partial}{\partial t}u_m^h(\bm{x}^c,t) d \bm{\Omega}_m
    -\int_{\bm{\Omega}_m} \nabla{\chi}_{m,i}(\bm{x}^c) \cdot
    \bm{f}(u_m^h(\bm{x}^c,t)) d\bm{\Omega}_m 
    +\int_{\bm{\Gamma}_m}{\chi}_{m,i}(\bm{x}^c) \hat{\bm{n}}_m \cdot \bm{f}^*(u_m^h(\bm{x}^c,t))d \bm{\Gamma}_m = {0},\\ 
    \forall i=1,\dots,N_p
\end{split}\label{eq: Weak DG PHYS}
\end{equation}

\noindent where $\bm{f}^*(u_m^h(\bm{x}^c,t))$ represents the physical numerical flux.

Now, we transform the physical DG weak form, Eq.~(\ref{eq: Weak DG PHYS}), to the reference space, by using the definitions of the differential volume and surface elements, physical gradient operator and physical unit normals  (Equations~(\ref{eq: diff element def}),~(\ref{eq: normals def}),~(\ref{eq: gradient def})), 

\begin{equation}
    \begin{split}
    \int_{\bm{\Omega}_r} {\chi}_{i}(\bm{\xi}^r) J_m^\Omega \frac{\partial}{\partial t} u_m^h(\bm{\Theta}_m(\bm{\xi}^r),t) d \bm{\Omega}_r
    -\int_{\bm{\Omega}_r} \Big(\frac{1}{J_m^\Omega} \nabla^r{\chi}_{i}(\bm{\xi}^r)\bm{C}_m^T\Big) 
    J_m^\Omega \cdot \bm{f}(u_m^h(\bm{\Theta}_m(\bm{\xi}^r),t)) d\bm{\Omega}_r \\
    +\int_{\bm{\Gamma}_r}{\chi}_{i}(\bm{\xi}^r) J_m^\Gamma \frac{1}{J_m^\Gamma} 
    \hat{\bm{n}}^r \bm{C}_m^T \cdot \bm{f}^*(u_m^h(\bm{\Theta}_m(\bm{\xi}^r),t))d \bm{\Gamma}_r = {0},
    \:\forall i=1,\dots,N_p.
\end{split}\label{eq: Weak DG transform}
\end{equation}

\noindent 

\noindent Notice the change of variables since $\bm{\chi}_m(\bm{x}^c)\coloneqq \bm{\chi}(\bm{\Theta}_m^{-1}(\bm{x}^c))$ are implicitly defined through polynomial basis functions in the reference space. From the definition Eq.~(\ref{eq: contravariant flux def}), the reference flux is substituted for $\bm{C}_m^T\cdot \bm{f}(u_m^h(\bm{\Theta}_m(\bm{\xi}^r),t))$ 
in the volume integral.
We then project the reference flux in Eq.~(\ref{eq: Weak DG transform}) onto the reference polynomial basis functions, 
and substitute the basis expansion for the solution. The variational DG weak form in reference space is thus,

\begin{equation}
\begin{split}
    \int_{\bm{\Omega}_r} {\chi}_i(\bm{\xi}^r) J_m^\Omega\bm{\chi}(\bm{\xi}^r) \frac{d}{d t} \hat{\bm{u}}_m(t)^T d \bm{\Omega}_r
    -\int_{\bm{\Omega}_r} \nabla^r{\chi}_i(\bm{\xi}^r) 
    \cdot \bm{\chi}(\bm{\xi}^r)\hat{\bm{f}}_m^r(t)^T d\bm{\Omega}_r 
    +\int_{\bm{\Gamma}_r} {\chi}_i(\bm{\xi}^r) 
    \hat{\bm{n}}^r\bm{C}_m^T \cdot \bm{f}_m^{*}(u_m^h(\bm{\Theta}_m(\bm{\xi}^r),t))d \bm{\Gamma}_r = {0},\\ \:\forall i=1,\dots,N_p.
\end{split}\label{eq: variational Weak DG}
\end{equation}

\noindent Next Eq.~(\ref{eq: variational Weak DG}), the reference DG weak form, is integrated by parts in the reference space resulting in, 

\begin{equation}
\begin{split}
    \int_{\bm{\Omega}_r} {\chi}_i(\bm{\xi}^r) J_m^\Omega\bm{\chi}(\bm{\xi}^r) \frac{d}{d t} \hat{\bm{u}}_m(t)^T d \bm{\Omega}_r
    +\int_{\bm{\Omega}_r} {\chi}_i(\bm{\xi}^r) \Bigg(\sum_{j=1}^{N_p}\nabla^r{\chi}_j(\bm{\xi}^r)
    \cdot \hat{\bm{f}}_{m,j}^r(t)\Bigg) d\bm{\Omega}_r 
    +\int_{\bm{\Gamma}_r}{\chi}_i(\bm{\xi}^r) \Big[ 
    \hat{\bm{n}}^r\bm{C}_m^T \cdot \bm{f}^*_m - \hat{\bm{n}}^r \cdot \bm{\chi}(\bm{\xi}^r)\bm{\hat{f}}^r_m(t)^T
    \Big]d \bm{\Gamma}_r = {0},\\ \forall i=1,\dots,N_p.
\end{split}\label{eq: variational Strong DG}
\end{equation}

\noindent In the general case, the interpolation of the nonlinear reference flux to the face does not equal the metric terms evaluated at the face multiplied with the flux on the face. 

{\color{black}Next, we introduce $N_{vp}$ volume and $N_{fp}$ facet cubature nodes, $\bm{\xi}_v^r$ and $\bm{\xi}_{f,k}^r$ respectively. We also introduce $\bm{W}$ and $\bm{J}_m$ as diagonal operators storing the quadrature weights and the determinant of the metric Jacobian at the volume cubature nodes.} We present the discretization of Eq.~(\ref{eq: variational Strong DG}), the discrete conservative DG strong form, as

\begin{equation}
    \bm{M}_m\frac{d}{d t} \hat{\bm{u}}_m(t)^T
    + \bm{S}_\xi \hat{\bm{f}}_{1m}^r(t)^T
    + \bm{S}_\eta \hat{\bm{f}}_{2m}^r(t)^T
    + \bm{S}_\zeta \hat{\bm{f}}_{3m}^r(t)^T
    +\sum_{f=1}^{N_f}\sum_{k=1}^{N_{fp}} \bm{\chi}(\bm{\xi}_{f,k}^r)^T {W}_{f,k}
    [\hat{\bm{n}}^r\bm{C}_m(\bm{\xi}_{f,k}^r)^T\cdot \bm{f}^*_m - \hat{\bm{n}}^r\cdot \bm{\chi}(\bm{\xi}_{f,k}^r)\hat{\bm{f}}^r_m(t)^T]
    =\bm{0}^T,\label{eq: Strong DG Discrete}
\end{equation}

\noindent where $N_f$ represents the number of faces on the element. 
The discrete mass and stiffness matrices are defined as,

\begin{equation}
    \begin{aligned}
      (\bm{M}_m)_{ij}\approx\int_{\Omega_r}J_m^\Omega {\chi}_i(\bm{\xi}^r){\chi}_j(\bm{\xi}^r)d\Omega_r \to \bm{M}_m= \bm{\chi}(\bm{\xi}_v^r)^T\bm{W}\bm{J}_m\bm{\chi}(\bm{\xi}_v^r),\\
      (\bm{S}_\xi)_{ij}=\int_{\Omega_r} {\chi}_i(\bm{\xi}^r)\frac{\partial}{\partial\xi}{\chi}_j(\bm{\xi}^r)d\Omega_r \rightarrow \bm{S}_\xi= \bm{\chi}(\bm{\xi}_v^r)^T\bm{W}\bm{\chi}_\xi(\bm{\xi}_v^r),
    \end{aligned}
    \nonumber
\end{equation}

\noindent and similarly for the other reference directions. The equality for the stiffness matrices holds for quadrature rules of at least $2p-1$ in  strength. Furthermore, we introduce the $\text{L}_2$ projection operator as $\bm{\Pi}\coloneqq\bm{M}^{-1}\bm{\chi}(\bm{\xi}_v^r)^T\bm{W}$, where the metric independent mass matrix is $\bm{M}=\bm{\chi}(\bm{\xi}_v^r)^T\bm{W}\bm{\chi}(\bm{\xi}_v^r)$. Thus, the modal coefficients of the reference flux are the $\text{L}_2$ projection of the reference flux, $\hat{\bm{f}}_m^r(t)^T=\bm{\Pi}(\bm{f}_m^{r^{T}})$. 

\subsection{DG - Non-Conservative Strong Form}\label{sec:DG non cons}

Returning to the physical DG weak form, Eq.~(\ref{eq: Weak DG PHYS}), as discussed in~\cite{moxey2019interpolation,botti2012influence}, there is no claim that the physical flux has a polynomial basis function expansion for curvilinear elements. 
{\color{black} We term the scheme \enquote{non-conservative} because it does not recover the definition of the reference divergence operator in Eq.~(\ref{eq: divergence def})~\cite{Fernandez2019curvitensor,chan2019discretely}.}
Following the approach in~\cite{NDG,karniadakis2013spectral}, we substitute the solution expansion and project the physical flux onto the basis functions. 
Eq.~(\ref{eq: Weak DG PHYS}) is integrated by parts in physical space and yields the \enquote{non-conservative} DG strong form in physical space,

\begin{equation}
\begin{split}
    \int_{\bm{\Omega}_m} {\chi}_{m,i}(\bm{x}^c) \bm{\chi}_{m}(\bm{x}^c)\frac{d}{d t}\hat{\bm{u}}_{m}(t)^T d \bm{\Omega}_m
    +\int_{\bm{\Omega}_m} {\chi}_{m,i}(\bm{x}^c)\Big( \sum_{j=1}^{N_p} \nabla {\chi}_{m,j}(\bm{x}^c) \cdot
    \hat{\bm{f}}_{m,j}(t)
    \Big) d\bm{\Omega}_m 
    +\int_{\bm{\Gamma}_m}{\chi}_{m,i}(\bm{x}^c) \hat{\bm{n}}_m \cdot 
    (\bm{f}^*_m - \bm{f}_m)
    d \bm{\Gamma}_m = {0},\\ \:\forall i=1,\dots,N_p.
\end{split}\label{eq: strong DG PHYS}
\end{equation}

To discretely represent the derivative of the physical flux in the physical space, it must be represented by the derivative of a basis expansion in the physical space multiplied by its modal coefficients. {\color{black}Although in the continuous sense 
the physical divergence operator could be recovered by commuting the basis functions across the dot product in Eq.~(\ref{eq: strong DG PHYS}); doing so would remove the claim that the physical flux has a basis function expansion.
Only in the reference space can the basis function be brought across the dot product since the derivative of a polynomial basis function on the reference element exist.}
Thus, discretely applying integration by parts in the physical space to arrive at Eq.~(\ref{eq: strong DG PHYS}) would necessitate that  $\bm{\chi}_m(\bm{x}^c)=\bm{\chi}(\bm{\Theta}_m^{-1}(\bm{x}^c))$ is a polynomial basis.
It is clear when transforming Eq.~(\ref{eq: strong DG PHYS}) to the reference space, and substituting the definition of the gradient for curvilinear elements, Eq.~(\ref{eq: gradient def}), that it is inconsistent with the previous formulation in Eq.~(\ref{eq: variational Strong DG}),

\begin{equation}
\begin{split}
    \int_{\bm{\Omega}_r} {\chi}_i(\bm{\xi}^r) J_m^\Omega\bm{\chi}(\bm{\xi}^r) \frac{d}{d t} \hat{\bm{u}}_m(t)^T d \bm{\Omega}_r
    +\int_{\bm{\Omega}_r} {\chi}_i(\bm{\xi}^r)\Bigg(\sum_{j=1}^{N_p} \Big(  \nabla^r
     {\chi}_j(\bm{\xi}^r)\bm{C}_m^T \Big)\cdot  \hat{\bm{f}}_{m,j}(t)\Bigg) d\bm{\Omega}_r 
    +\int_{\bm{\Gamma}_r} {\chi}_i(\bm{\xi}^r)  
   \hat{\bm{n}}^r\bm{C}_m^T\cdot (\bm{f}^*_m-\bm{f}_m)
    d \bm{\Gamma}_r = {0},\\ \: \forall i=1,\dots,N_p.
\end{split}\label{eq: variational Strong DG inconsistent}
\end{equation}

\noindent 
Explicitly, the metric cofactor matrix appears on the outside of the reference divergence/gradient operator. Only if the mesh is linear, skew-symmetric, or symmetric with uniform constant wave speeds for linear advection, will the volume integrals in Equations~(\ref{eq: variational Strong DG}) and~(\ref{eq: variational Strong DG inconsistent}) be equivalent in discrete form.

\begin{thrm}
    The volume terms in Eq.~(\ref{eq: variational Strong DG}) and Eq.~(\ref{eq: variational Strong DG inconsistent}) are inherently different for a curvilinear mesh
    ; even with exact integration and exact metric terms. \label{thm: difference cons and non cons}
\end{thrm}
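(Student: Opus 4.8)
The plan is to reduce both volume terms to a common functional form acting on the same test function $\chi_i$, and then to compute their difference explicitly. First I would use the elementary identity $\sum_j \partial_{\xi^k}\chi_j(\bm{\xi}^r)\,\hat{c}_j = \partial_{\xi^k}\big(\sum_j \chi_j(\bm{\xi}^r)\,\hat{c}_j\big)$, valid for any modal coefficients $\hat{c}_j$, to collapse the modal derivative sums into single reference derivatives of projected fluxes. This turns the conservative volume term in Eq.~\eqref{eq: variational Strong DG} into $\int_{\bm{\Omega}_r}\chi_i \sum_k \partial_{\xi^k}\big(\bm{\Pi}(\bm{f}^r_k)\big)\,d\bm{\Omega}_r$, where $\bm{\Pi}(\bm{f}^r_k)$ is the projection of the $k$-th reference-flux component $\bm{f}^r_k=\sum_m (\bm{C}_m^T)_{km}\,\bm{f}_m$ (Eq.~\eqref{eq: contravariant flux def}); and it turns the non-conservative volume term in Eq.~\eqref{eq: variational Strong DG inconsistent} into $\int_{\bm{\Omega}_r}\chi_i\sum_{k,m}(\bm{C}_m^T)_{km}\,\partial_{\xi^k}\big(\bm{\Pi}(\bm{f}_m)\big)\,d\bm{\Omega}_r$, in which only the physical flux is projected and the metric cofactor sits outside the derivative.

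With both terms expressed this way, I would form the difference and add and subtract $\partial_{\xi^k}\big((\bm{C}_m^T)_{km}\,\bm{\Pi}(\bm{f}_m)\big)$ inside the sum. This separates the difference into a \emph{metric-derivative} piece $\sum_m\big(\sum_k\partial_{\xi^k}(\bm{C}_m)_{mk}\big)\bm{\Pi}(\bm{f}_m)$ and a \emph{projection-commutator} piece $\sum_{k,m}\partial_{\xi^k}\big[\bm{\Pi}\big((\bm{C}_m^T)_{km}\bm{f}_m\big)-(\bm{C}_m^T)_{km}\,\bm{\Pi}(\bm{f}_m)\big]$. The crucial simplification is that the metric-derivative piece vanishes identically: since the cofactor is built in conservative-curl form, the GCL in Eq.~\eqref{eq: GCL} gives $\sum_k\partial_{\xi^k}(\bm{C}_m)_{mk}=0$ for each $m$, and with analytically exact metric terms this is an exact identity rather than merely a discrete one. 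Hence the product-rule discrepancy one might expect between the two forms is annihilated by the GCL.

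Consequently the entire difference collapses to $\int_{\bm{\Omega}_r}\chi_i\sum_{k,m}\partial_{\xi^k}\big[\bm{\Pi}\big((\bm{C}_m^T)_{km}\bm{f}_m\big)-(\bm{C}_m^T)_{km}\,\bm{\Pi}(\bm{f}_m)\big]\,d\bm{\Omega}_r$, i.e.\ to the derivative of the commutator between the $L_2$ projection and multiplication by the metric cofactor. I would then establish that this commutator is generically nonzero exactly when the element is curvilinear. When $\bm{C}_m$ is constant (affine element), multiplication by $(\bm{C}_m^T)_{km}$ maps the degree-$p$ space $P^p$ into itself and commutes with $\bm{\Pi}$, so the bracket vanishes and the two forms coincide; when $\bm{C}_m$ is a nonconstant polynomial, $(\bm{C}_m^T)_{km}\,\bm{\Pi}(\bm{f}_m)$ leaves $P^p$ and is re-truncated by $\bm{\Pi}$ differently than $\bm{\Pi}\big((\bm{C}_m^T)_{km}\bm{f}_m\big)$, so the bracket is a nonzero polynomial whose reference derivative does not integrate to zero against every $\chi_i$. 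To make \enquote{inherently different} rigorous I would exhibit a minimal counterexample, e.g.\ a single quadratic (non-affine) mapping with a linear flux, and evaluate both functionals to confirm a nonzero difference.

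The main obstacle I anticipate is conceptual rather than computational: one must show that the difference survives the idealizations in the statement. The natural worry is that exact integration, exact metrics, and the product rule ought to reconcile the two forms; the resolution is that the product-rule/metric-derivative term is precisely what the GCL removes, leaving a residue that is a pure aliasing effect of the polynomial projection $\bm{\Pi}$, independent of quadrature accuracy and present even with analytic metrics. The delicate step is therefore proving that this projection commutator cannot vanish identically for a genuinely curvilinear element, which I would settle by the degree-counting argument above together with the explicit counterexample, consistent with the paper's observation that equivalence is recovered only for linear, skew-symmetric, or the special symmetric constant-wave-speed configurations.
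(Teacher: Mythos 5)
Your proposal is correct, and it takes a genuinely different (and more constructive) route than the paper. The paper's own proof is a two-line inspection argument: it isolates one divergence term from each form, writes the conservative term as $\sum_k\int_{\bm{\Omega}_r}\chi_i\sum_j\partial_{\xi_k}\chi_j\,[\bm{\Pi}(J_m^\Omega\tfrac{\partial\xi_k}{\partial x}\bm{f}_x)]_j\,d\bm{\Omega}_r$ and the non-conservative term as $\sum_k\int_{\bm{\Omega}_r}\chi_i\,J_m^\Omega\tfrac{\partial\xi_k}{\partial x}\sum_j\partial_{\xi_k}\chi_j\,[\bm{\Pi}(\bm{f}_x)]_j\,d\bm{\Omega}_r$, and simply observes that with the metric terms inside versus outside the projection the two cannot coincide for a general flux. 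You instead subtract the two terms, add and subtract $\partial_{\xi^k}\bigl[(\bm{C}_m^T)_{km}\bm{\Pi}(\bm{f}_m)\bigr]$, annihilate the product-rule piece using the GCL of Eq.~(\ref{eq: GCL}) (an exact identity for analytic metrics), and reduce the entire discrepancy to the commutator $\bm{\Pi}\bigl((\bm{C}_m^T)_{km}\bm{f}_m\bigr)-(\bm{C}_m^T)_{km}\bm{\Pi}(\bm{f}_m)$, which your degree-counting argument correctly shows vanishes exactly when $\bm{C}_m$ is constant. This buys two things the paper's proof does not state: it localizes the difference as a pure aliasing effect of the $L_2$ projection, and it explains structurally why the paper's Remark holds (equivalence is recoverable for linear flux only if the conservative nonlinear term is projected onto a sufficiently enriched space). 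One caution for your planned counterexample: with a linear flux you have $\bm{\Pi}(\bm{f}_m)=\bm{f}_m$, so writing $w=(\bm{C}_m^T)_{km}\bm{f}_m-\bm{\Pi}\bigl((\bm{C}_m^T)_{km}\bm{f}_m\bigr)$, which is $L_2$-orthogonal to the degree-$p$ space, and integrating $\int_{\bm{\Omega}_r}\chi_i\,\partial_{\xi^k}w\,d\bm{\Omega}_r$ by parts kills the volume contribution by orthogonality (since $\partial_{\xi^k}\chi_i$ stays in the tensor-product space), leaving only the surface residue $\int_{\bm{\Gamma}_r}\chi_i\,w\,\hat{n}^r_k\,d\bm{\Gamma}_r$; your explicit evaluation must therefore verify that this boundary residue is nonzero for the chosen quadratic mapping, and the mapping should be chosen non-symmetric to avoid the accidental cancellations the paper itself warns about.
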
 

\begin{proof}

Consider just one of the divergence terms in the volume integral, 

\begin{equation}
    \begin{split}
        \text{Conservative DG: } \sum_{k=1}^{d} \int_{\bm{\Omega}_r} \chi_{i}(\bm{\xi}^r)
      \sum_{j=1}^{N_p}  \frac{\partial \chi_j(\bm{\xi}^r)}{\partial \xi_k}\Bigg[ \bm{\Pi}
        \Big({J}_{m}^\Omega \frac{\partial \xi_k}{\partial x} \bm{f}_{x}(u_m^h)   \Big)\Bigg]_j d\bm{\Omega}_r,\\
        \text{Non-Conservative DG: } \sum_{k=1}^{d} \int_{\bm{\Omega}_r} \chi_{i}(\bm{\xi}^r)
        J_m^\Omega \frac{\partial \xi_k}{\partial x}\sum_{j=1}^{N_p}
        \frac{\partial \chi_j(\bm{\xi}^r) }{\partial \xi_k} \Bigg[ \bm{\Pi}\Big( \bm{f}_x(u_m^h)  \Big)\Bigg]_j d\bm{\Omega}_r.
    \end{split}
\end{equation}

\noindent If we are to consider both exact integration and exact metric terms ($J_m^\Omega \frac{\partial \xi_k}{\partial x}$), then the two forms cannot be equivalent for a general $f_x(u_m^h)$.

\end{proof}

\begin{remark}
Only for the specific case of linear advection with a polynomial representation of the mesh can the two forms be equivalent through polynomial exactness; provided they are both exactly integrated and the nonlinear term in the conservative form is projected onto a sufficiently high polynomial space.
\end{remark}


\subsection{DG - Split Form}\label{sec: DG Split Form}

{\color{black}For the objective of developing provably stable schemes}, alike ~\cite{aalund2019encapsulated,fernandez2019entropy,chan2019discretely}, we introduce the split form by adding a half of the conservative DG Strong form Eq.~(\ref{eq: variational Strong DG}) with the non-conservative DG Strong form Eq.~(\ref{eq: variational Strong DG inconsistent}) to {\color{black}discretely satisfy integration by parts},

\begin{equation}
    \begin{split}
    \int_{\bm{\Omega}_r} {\chi}_i(\bm{\xi}^r) J_m^\Omega\bm{\chi}(\bm{\xi}^r) \frac{d}{d t} \hat{\bm{u}}_m(t)^T d \bm{\Omega}_r
    +\frac{1}{2}\int_{\bm{\Omega}_r} {\chi}_i(\bm{\xi}^r) \Bigg(\sum_{j=1}^{N_p}\nabla^r{\chi}_j(\bm{\xi}^r) 
    \cdot \hat{\bm{f}}_{m,j}^r(t) \Bigg)d\bm{\Omega}_r 
    +\frac{1}{2}\int_{\bm{\Omega}_r} {\chi}_i(\bm{\xi}^r) \Bigg(\sum_{j=1}^{N_p} \Big( \nabla^r
    {\chi}_j(\bm{\xi}^r)\bm{C}_m^T\Big) \cdot  \hat{\bm{f}}_{m,j}(t)\Bigg) d\bm{\Omega}_r\\
    +\int_{\bm{\Gamma}_r} {\chi}_i(\bm{\xi}^r)^T  \Big[
    \hat{\bm{n}}^r\bm{C}_m^T\cdot(\bm{f}^*_m-\frac{1}{2}\bm{f}_m)-\hat{\bm{n}}^r\cdot \frac{1}{2}\bm{\chi}(\bm{\xi}^r)\hat{\bm{f}}^r_m(t)^T\Big]
    d \bm{\Gamma}_r = {0},\: \forall i=1,\dots,N_p.
\end{split}\label{eq: var DG split}
\end{equation}

Note that the surface splitting naturally accommodates arbitrary sets of volume and facet cubature nodes. 
Recasting Eq.~(\ref{eq: var DG split}) into discrete form by evaluating at volume and facet cubature nodes, we have the DG split form,

\begin{equation}
\begin{split}
   \bm{M}_m\frac{d}{d t} \hat{\bm{u}}_m(t)^T
   +\frac{1}{2}\bm{\chi}(\bm{\xi}_v^r)^T\bm{W}\nabla^r\bm{\chi}(\bm{\xi}_v^r)\cdot \hat{\bm{f}}_m^r(t)^T
    +\frac{1}{2}\bm{\chi}(\bm{\xi}_v^r)^T\bm{W}
   \Tilde{\nabla}^r\bm{\chi}(\bm{\xi}_v^r)\cdot   \hat{\bm{f}}_{m}(t)^T 
    +\sum_{f=1}^{N_f}\sum_{k=1}^{N_{fp}} \bm{\chi}(\bm{\xi}_{f,k}^r)^T W_{f,k}[\hat{\bm{n}}^r\cdot \bm{f}^{C,r}_m]=\bm{0}^T,
    \end{split}\label{eq: DG split}
\end{equation}

\noindent where we introduced $\Tilde{\nabla}^r\bm{\chi}(\bm{\xi}_v^r) = \begin{pmatrix}
  \nabla^r{\chi}_1(\bm{\xi}_{v,1}^r)\bm{C}_m(\bm{\xi}_{v,1}^r)^T &\dots&   \nabla^r{\chi}_{N_{p}}(\bm{\xi}_{v,1}^r)\bm{C}_m(\bm{\xi}_{v,1}^r)^T\\
\vdots & \ddots & \vdots\\
 \nabla^r{\chi}_1(\bm{\xi}_{v,N_{vp}}^r) \bm{C}_m(\bm{\xi}_{v,N_{vp}}^r) ^T&\dots&   \nabla^r{\chi}_{N_{p}}(\bm{\xi}_{v,N_{vp}}^r)\bm{C}_m(\bm{\xi}_{v,N_{vp}}^r)^T
\end{pmatrix}$ to store the transformed reference gradient of the basis functions evaluated at volume cubature nodes. Also, we introduced\\ $\bm{f}^{C,r}_m=\bm{f}^*_m\bm{C}_m(\bm{\xi}_{f,k}^r)-\frac{1}{2}\bm{f}_m\bm{C}_m(\bm{\xi}_{f,k}^r)-\frac{1}{2}\bm{\chi}(\bm{\xi}_{f,k}^r)\bm{\hat{f}}_m^r(t)^T$ as the difference between the reference transformation of the physical numerical flux, the physical flux and the interpolated reference flux on the face.

\section{Energy Stable Flux Reconstruction}\label{sec: ESFR}
\subsection{ESFR - Classical Formulation}\label{sec: ESFR classical}

Following an ESFR framework, the reference flux is composed of a discontinuous and a corrected component,
\begin{equation}
    \bm{f}^r(u_m^h(\bm{\Theta}_m(\bm{\xi}^r),t))\coloneqq 
    \bm{f}^{D,r}(u_m^h(\bm{\Theta}_m(\bm{\xi}^r),t)) + \sum_{f=1}^{N_f}\sum_{k=1}^{N_{f_{p}}} \bm{g}^{f,k}(\bm{\xi}^r)[\hat{\bm{n}}^r\cdot ({\bm{f}}_m^{*,r}-{\bm{f}}_m^r)].\label{eq: ESFR corr flux}
\end{equation}

\noindent 
The vector correction functions $\bm{g}^{f,k}(\bm{\xi}^r)\in\mathbb{R}^{1\times d}$ associated with face $f$, facet cubature node $k$ in the reference element, are defined as the tensor product of the $p+1$ order one-dimensional correction functions ($\bm{\phi}$ stores a basis of order $p+1$), with the corresponding $p$-th order basis functions in the other reference directions.
\begin{equation}
\begin{split}
    \bm{g}^{f,k}(\bm{\xi}^r) = [\Big(\bm{\phi}(\xi)\otimes \bm{\chi}({\eta}) \otimes \bm{\chi}({\zeta})\Big)\Big(\hat{\bm{g}}^{f,k}_1\Big)^T , \Big(\bm{\chi}({\xi})\otimes \bm{\phi}(\eta)\otimes \bm{\chi}({\zeta})\Big)\Big(\hat{\bm{g}}^{f,k}_2\Big)^T,\Big(\bm{\chi}({\xi}) \otimes \bm{\chi}({\eta}) \otimes \bm{\phi}(\zeta)\Big) \Big(\hat{\bm{g}}^{f,k}_3\Big)^T ] \\
    = [{g}^{f,k}_1(\bm{\xi}^r), {g}^{f,k}_2(\bm{\xi}^r), {g}^{f,k}_3(\bm{\xi}^r)],
    \end{split}
\end{equation}

\noindent such that 
\begin{equation}
    \bm{g}^{f,k}(\bm{\xi}_{f_i, k_j}^r) \cdot \hat{\bm{n}}^r_{f_i,k_j} = \begin{cases}
    1, \;\;\text{ if } f_i = f,\text{ and } k_j=k\\
    0, \;\;\text{ otherwise.}
    \end{cases}\label{eq: ESFR conditions}
\end{equation}

Coupled with the symmetry condition $g^L(\xi^r)=-g^R(-\xi^r)$ to satisfy Eq.~(\ref{eq: ESFR conditions}), the one-dimensional ESFR fundamental assumption from~\cite{vincent_new_2011} is, 

\begin{equation}
    \int_{-1}^{1} \nabla^r {\chi}_i({\xi}^r)
      {g}^{f,k}({\xi}^r) d\xi  
    - c\frac{\partial^{p} {\chi}_i({\xi}^r) ^T}{\partial \xi^p}
   \frac{\partial^{p+1} {g}^{f,k}({\xi}^r)}{\partial \xi^{p+1}} = {0},\: \forall i=1,\dots,N_p,\label{eq: 1D ESFR fund assumpt}
\end{equation}

\noindent and similarly for the other reference directions.

Akin to~\cite{castonguay2012newTRI,williams_energy_2014}, consider introducing the differential operator,

\begin{equation}
\begin{split}
&\text{2D:} \indent \partial^{(s,v)}=\frac{\partial^{s+v}}{\partial \xi^s\partial \eta^v},\: \text{such that } s=\{ 0,p\},\: v=\{ 0,p\},\: s+v\geq p ,\\
&\text{3D:}\indent     \partial^{(s,v,w)}=\frac{\partial^{s+v+w}}{\partial \xi^s\partial \eta^v\partial \zeta^w},\: \text{such that } s=\{ 0,p\},\: v=\{ 0,p\},\: w=\{ 0,p\},\: s+v+w\geq p,\label{eq: partial deriv ESFR def}
\end{split}
\end{equation}

\noindent with its corresponding correction parameter

\begin{equation}
\begin{split}
&\text{2D:} \indent c_{(s,v)}=c_{1D}^{(\frac{s}{p}+\frac{v}{p})},\\
&\text{3D:}\indent     c_{(s,v,w)}=c_{1D}^{(\frac{s}{p}+\frac{v}{p}+\frac{w}{p})}.
    \end{split}
\end{equation}

\noindent {\color{black} Note that the total degree is $dim\times p$ for a tensor-product basis that is of order $p$ in each direction.}

\noindent For example,

\begin{equation}
    \partial^{(0,p,0)} = \frac{\partial^p}{\partial \eta^p},\: 
     c_{(0,p,0)} = c_{1D},\:
    \partial^{(p,0,p)}= \frac{\partial^{2p}}{\partial \xi^p \partial \zeta^p},\: c_{(p,0,p)} = c_{1D}^2,\:
    \partial^{(p,p,p)}= \frac{\partial^{3p}}{\partial \xi^p \partial \eta^p \partial \zeta^p},\: c_{(p,p,p)} = c_{1D}^3.
    \nonumber
\end{equation}

\noindent Since $\int_{\bm{\Omega}_r}\partial^{(s,v,w)}\bm{\chi}(\bm{\xi}^r)^T\partial^{(s,v,w)}\Big(\nabla^r\bm{\chi}(\bm{\xi}^r)\Big) d\bm{\Omega}_r$ composes of the complete broken Sobolev-norm for each $s$, $v$, $w$~\cite{sheshadri2016stability,Cicchino2020NewNorm}, the tensor product ESFR fundamental assumption, that recovers the VCJH schemes exactly for linear elements is defined as,

\begin{equation}
        \int_{\bm{\Omega}_r} \nabla^r {\chi}_i(\bm{\xi}^r) \cdot
     \bm{g}^{f,k}(\bm{\xi}^r) d\bm{\Omega}_r  
    - \sum_{s,v,w}c_{(s,v,w)}\partial^{(s,v,w)} {\chi}_i(\bm{\xi}^r) 
   \partial^{(s,v,w)} \Big( \nabla^r \cdot \bm{g}^{f,k}(\bm{\xi}^r)\Big) = {0},\: \forall i=1,\dots, N_p,\label{eq: ESFR fund assumpt}
\end{equation}

\noindent where $\sum_{s,v,w}$ sums over all possible $s$, $v$, $w$ combinations in Eq.~(\ref{eq: partial deriv ESFR def}).

To discretely represent the divergence of the correction functions, we introduce the correction field \newline ${h}^{f,k}(\bm{\xi}^r)\in P_{3p}(\bm{\Omega}_r)$ associated with the face $f$ cubature node $k$ as,

\begin{equation}
    {h}^{f,k}(\bm{\xi}^r) = \bm{\chi}(\bm{\xi}^r)\Big(\hat{\bm{h}}^{f,k}\Big)^T
    =\nabla^r \cdot \bm{g}^{f,k}(\bm{\xi}^r).
\end{equation}
To arrive at the ESFR strong form, we substitute the ESFR reference flux, Eq.~(\ref{eq: ESFR corr flux}), into the elementwise reference residual, Eq.~(\ref{eq: residual reference}), project it onto the polynomial basis, and evaluate at cubature nodes,

\begin{equation}
    \bm{\chi}(\bm{\xi}_v^r)\frac{d}{dt}\hat{\bm{u}}_m(t)^T 
    +\bm{J}_m^{-1}\nabla^r\bm{\chi}(\bm{\xi}_v^r) \cdot \hat{\bm{f}}_{m}^{D,r}(t)^T
    +\bm{J}_m^{-1}\sum_{f=1}^{N_f}\sum_{k=1}^{N_{fp}} \bm{\chi}(\bm{\xi}_v^r)\Big(\hat{\bm{h}}^{f,k}\Big)^T [\hat{\bm{n}}^r\cdot ({\bm{f}}_m^{*,r}-{\bm{f}}_m^r)]   =\bm{0}^T.
    \label{eq:ESFR strong}
\end{equation}

Since Eq.~(\ref{eq:ESFR strong}) does not mimic integration by parts in the physical domain, as previously demonstrated in Sections ~\ref{sec:DG non cons} and~\ref{sec: DG Split Form}, we introduce the split form in compact form,

\begin{equation}
    \begin{split}
        \bm{\chi}(\bm{\xi}_v^r)\frac{d}{dt}\hat{\bm{u}}_m(t)^T 
    +\frac{1}{2} \bm{J}_m^{-1}\nabla^r\bm{\chi}(\bm{\xi}_v^r)\cdot \hat{\bm{f}}_{m}^{D,r}(t)^T
    +\frac{1}{2}\bm{J}_m^{-1}\Tilde{\nabla}^r\bm{\chi}(\bm{\xi}_v^r) \cdot \hat{\bm{f}}_{m}^{D}(t)^T
    +\bm{J}_m^{-1}\sum_{f=1}^{N_f}\sum_{k=1}^{N_{fp}} \bm{\chi}(\bm{\xi}_v^r)\Big(\hat{\bm{h}}^{f,k}\Big)^T [{\bm{n}}^r \cdot {\bm{f}}^{C,r}_m]   =\bm{0}^T.
    \end{split}\label{eq: ESFR split unstable}
\end{equation}

Unfortunately, Eq.~(\ref{eq: ESFR split unstable}), which we will coin as the \enquote{Classical ESFR split form} is not energy stable since the nonlinearity introduced by both the metric cofactor matrix and determinant of the Jacobian prevents the volume terms from vanishing within the broken Sobolev-norm introduced in~\cite{jameson_proof_2010}. Fortunately, there is a modified form of Eq.~(\ref{eq: ESFR split unstable}) which is provably stable and recovers the Classical ESFR scheme for linear problems. We will term the proposed split form, which we now derive, as the \enquote{ESFR split form}.

To derive the proposed ESFR split form, we recast ESFR as a filtered DG scheme. To do so, as shown in~\cite{zwanenburg_equivalence_2016, allaneau_connections_2011,Cicchino2020NewNorm}, we integrate Eq.~(\ref{eq: ESFR split unstable}) with respect to the basis function as the test function in the physical domain. Using the definitions of the differential volume and surface elements, Eq.~(\ref{eq: diff element def}), we integrate the divergence of the correction functions by parts,

\begin{equation}
\begin{split}
    \int_{\bm{\Omega}_r} {\chi}_i(\bm{\xi}^r)  J_m^\Omega \bm{\chi}(\bm{\xi}^r)\frac{d}{dt}\hat{\bm{u}}_m(t)^T d\bm{\Omega}_r
  +\frac{1}{2} \int_{\bm{\Omega}_r} {\chi}_i(\bm{\xi}^r)  \nabla^r\bm{\chi}(\bm{\xi}^r)  \cdot \hat{\bm{f}}_m^{D,r}(t)^T d\bm{\Omega}_r
    +\frac{1}{2} \int_{\bm{\Omega}_r} {\chi}_i(\bm{\xi}^r)  \Tilde{\nabla}^r \bm{\chi}(\bm{\xi}^r) \cdot \hat{\bm{f}}_{m}^{D}(t)^T d\bm{\Omega}_r\\
    + \int_{\bm{\Gamma}_r}  {\chi}_i(\bm{\xi}^r)  \Big(\hat{\bm{n}}^r\cdot\bm{g}^{f,k}(\bm{\xi}^r)\Big)[\hat{\bm{n}}^r\cdot \bm{f}^{C,r}_m ]d\bm{\Gamma}_r
    -\int_{\bm{\Omega}_r} \nabla^r {\chi}_i(\bm{\xi}^r)  \cdot \bm{g}^{f,k}(\bm{\xi}^r)[\hat{\bm{n}}^r\cdot \bm{f}^{C,r}_m] d\bm{\Omega}_r =  {0},\: \forall i=1,\dots , N_p.
\end{split}\label{eq:ESFR int parts}
\end{equation}

From the ESFR correction functions' surface condition, Eq.~(\ref{eq: ESFR conditions}), the facet integral in Eq.~(\ref{eq:ESFR int parts}) is the exact same as the facet integral in the DG strong split form Eq.~(\ref{eq: var DG split}). Also, the reference discontinuous flux for the ESFR scheme is the same as the reference flux for a DG scheme (from definition). Thus, we will drop the $D$ superscript for the flux.

Next, as in the ESFR literature, we apply the differential operator $\partial^{(s,v,w)}$ 
on Eq.~(\ref{eq: ESFR split unstable}), then left multiply and integrate with respect to the $\partial^{(s,v,w)}$ derivative of the basis function as the test function in the physical domain~\cite{Cicchino2020NewNorm,zwanenburg_equivalence_2016,huynh_flux_2007,vincent_insights_2011,castonguay_phd,jameson2010proof}. Then 
a scalar $c_{(s,v,w)}$ is incorporated and the expression is summed over 
all $(s,v,w)$ combinations. The order of those steps is extremely important as it ensures a {\color{black}positive-definite} broken Sobolev-norm, 
which solves the issue presented in~\cite{svard2004coordinate,ranocha2017extended}. 
This results in,

\begin{equation}
\begin{split}
    \sum_{s,v,w} c_{(s,v,w)}\int_{\bm{\Omega}_r} J_m^\Omega \partial^{(s,v,w)}{\chi}_i(\bm{\xi}^r)  \partial^{(s,v,w)}\bm{\chi}(\bm{\xi}^r)\frac{d}{dt}\hat{\bm{u}}_m(t)^T d\bm{\Omega}_r
     + \sum_{s,v,w}c_{(s,v,w)}\int_{\bm{\Omega}_r} J_m^\Omega \partial^{(s,v,w)}{\chi}_i(\bm{\xi}^r)  \partial^{(s,v,w)}\Big(\frac{1}{J_m^\Omega}\nabla^r\cdot \bm{g}^{f,k}(\bm{\xi}^r)\Big)[\hat{\bm{n}}^r\cdot \bm{f}_m^{C,r}] d\bm{\Omega}_r 
    \\
     +\sum_{s,v,w}\frac{c_{(s,v,w)}}{2}\int_{\bm{\Omega}_r} J_m^\Omega 
     \partial^{(s,v,w)}{\chi}_i(\bm{\xi}^r)
     \partial^{(s,v,w)}
     \Big[\frac{1}{J_m^\Omega}\nabla^r\bm{\chi}(\bm{\xi}^r)\cdot \hat{\bm{f}}_m^r(t)^T +
     \frac{1}{J_m^\Omega}\Tilde{\nabla}^r \bm{\chi}(\bm{\xi}^r) \cdot \hat{\bm{f}}_{m}(t)^T
     \Big] d\bm{\Omega}_r=  {0},\: \forall i=1,\dots , N_p.\label{eq: ESFR 3p deriv}
\end{split}
\end{equation}

\noindent Adding Eqs.~(\ref{eq:ESFR int parts}) and~(\ref{eq: ESFR 3p deriv}) together results in,

\begin{equation}
\begin{split}
 \int_{\bm{\Omega}_r} \Big( {\chi}_i(\bm{\xi}^r)  J_m^\Omega \bm{\chi}(\bm{\xi}^r)
 +
  \sum_{s,v,w} c_{(s,v,w)} J_m^\Omega \partial^{(s,v,w)}{\chi}_i(\bm{\xi}^r)  \partial^{(s,v,w)}\bm{\chi}(\bm{\xi}^r)
 \Big)\frac{d}{dt}\hat{\bm{u}}_m(t)^T d\bm{\Omega}_r
        \\
   +\frac{1}{2} \int_{\bm{\Omega}_r} {\chi}_i(\bm{\xi}^r)  \nabla^r\bm{\chi}(\bm{\xi}^r)  \cdot \hat{\bm{f}}_m^{r}(t)^T d\bm{\Omega}_r
    +\frac{1}{2} \int_{\bm{\Omega}_r} {\chi}_i(\bm{\xi}^r)  \Tilde{\nabla}^r \bm{\chi}(\bm{\xi}^r) \cdot \hat{\bm{f}}_{m}(t)^T d\bm{\Omega}_r
    + \int_{\bm{\Gamma}_r}  {\chi}_i(\bm{\xi}^r)  [\hat{\bm{n}}^r\cdot \bm{f}^{C,r}_m ]d\bm{\Gamma}_r\\
     +\sum_{s,v,w}\frac{c_{(s,v,w)}}{2}\int_{\bm{\Omega}_r} J_m^\Omega
     \partial^{(s,v,w)}{\chi}_i(\bm{\xi}^r)
     \partial^{(s,v,w)}
     \Big[\frac{1}{J_m^\Omega}\nabla^r\bm{\chi}(\bm{\xi}^r)\cdot \hat{\bm{f}}_m^r(t)^T +
     \frac{1}{J_m^\Omega}\Tilde{\nabla}^r \bm{\chi}(\bm{\xi}^r) \cdot \hat{\bm{f}}_{m}(t)^T
     \Big] d\bm{\Omega}_r
    \\
    -\Bigg(\int_{\bm{\Omega}_r} \nabla^r {\chi}_i(\bm{\xi}^r)  \cdot \bm{g}^{f,k}(\bm{\xi}^r) d\bm{\Omega}_r
   - \sum_{s,v,w}c_{(s,v,w)}\int_{\bm{\Omega}_r}  J_m^\Omega \partial^{(s,v,w)}{\chi}_i(\bm{\xi}^r)  \partial^{(s,v,w)}\Big(\frac{1}{J_m^\Omega}\nabla^r\cdot \bm{g}^{f,k}(\bm{\xi}^r)\Big) d\bm{\Omega}_r\Bigg)[\hat{\bm{n}}^r\cdot \bm{f}_m^{C,r}]
    =  {0},\: \forall i=1,\dots , N_p.\label{eq: ESFR add equations together}
\end{split}
\end{equation}

\noindent Note that $[\hat{\bm{n}}^r\cdot \bm{f}_m^{C,r}]$ is a constant evaluated on the surface, so it can be factored out of the last volume integrals~\cite{vincent_new_2011,castonguay2012newTRI}.

{ 
The root of the instability of the classical ESFR in split form is demonstrated in the third line of Eq.~(\ref{eq: ESFR add equations together}). On linear grids, the determinant of the Jacobian and the metric cofactor matrix are both constants, and render the $\partial^{(s,v,w)}$ derivative of the divergence of the discontinuous flux to be skew-symmetric~\cite{sheshadri2016stability}. However, for curvilinear elements, the determinant of the Jacobian and the metric cofactor matrix are both nonlinear polynomials. Thus, the $\partial^{(s,v,w)}$ derivative of the volume terms does not vanish in Eq.~(\ref{eq: ESFR add equations together}). Ranocha~\etal in~\cite{ranocha2016summation} circumvented the issue by setting the ESFR contribution to zero and solving for the DG case ($c_{(s,v,w)}=0$). In the case of Abe~\etal~\cite{abe2018stable}, the authors showed stability for Huynh's $\text{g}_2$ lumped-Lobatto scheme. This was expected since Huynh's $\text{g}_2$ lumped-Lobatto scheme is equivalent to a collocated DG scheme on GLL nodes~\cite{de2014connections}. 

}

An additional issue introduced by ESFR on curvilinear grids is that the aforementioned ESFR stability condition (fundamental assumption) in Eq.~(\ref{eq: ESFR fund assumpt}) (or the 1D analogous Eq.~(\ref{eq: 1D ESFR fund assumpt})) only holds true on linear grids. That is, because in Eq.~(\ref{eq: ESFR add equations together}), if the determinant of the Jacobian was constant, then it would be factored off in the last integral and the $\partial^{(s,v,w)}$ derivative of the corresponding mode of the correction functions would then be factored out of the integral~\cite{Cicchino2020NewNorm,castonguay_phd,castonguay_new_2012,vincent_extended_2015,vincent_insights_2011,huynh_reconstruction_2009,witherden2016high}. On general curvilinear coordinates, this is not true, even for analytically exact metric terms and exact integration as per Theorem~\ref{thm: difference cons and non cons}, and the complete ESFR fundamental assumption for three-dimensional tensor product curvilinear elements should be,

\begin{equation}
    \int_{\bm{\Omega}_r} \nabla^r {\chi}_i(\bm{\xi}^r) \cdot
     \bm{g}^{f,k}(\bm{\xi}^r) d\bm{\Omega}_r  
    - \sum_{s,v,w} c_{(s,v,w)}\int_{\bm{\Omega}_r}J_m^\Omega \partial^{(s,v,w)}{\chi}_i(\bm{\xi}^r)  
  \partial^{(s,v,w)}\Big( \frac{1}{J_m^\Omega}\nabla^r \cdot \bm{g}^{f,k}(\bm{\xi}^r)\Big)d\bm{\Omega}_r = {0},\: \forall i=1,\dots,N_p.\label{eq: CURV ESFR fund assumpt}
\end{equation}

\noindent If the grid is constant/linear then Eq.~(\ref{eq: CURV ESFR fund assumpt}) simplifies to Eq.~(\ref{eq: ESFR fund assumpt}) with a constant scaling of the volume of the reference element on $c_{(s,v,w)}$. To extend Eq.~(\ref{eq: CURV ESFR fund assumpt}) for triangular and prismatic curvilinear grids, one should change the $\partial^{(s,v,w)}$ derivative with the operator $\bm{D}^{p,v,w}$ presented in~\cite{castonguay2012newTRI,zwanenburg_equivalence_2016,castonguay_phd}, and the analysis/result is the same.

Therefore, using the metric dependent ESFR stability criteria, Eq.~(\ref{eq: CURV ESFR fund assumpt}) in Eq.~(\ref{eq: ESFR add equations together}), and evaluating bilinear forms at cubature nodes results in,

\begin{equation}
\begin{split}
     \Big(\bm{M}_m+\bm{K}_m\Big)\frac{d}{d t} \hat{\bm{u}}_m(t)^T
    + \frac{1}{2}\bm{\chi}(\bm{\xi}_v^r)^T\bm{W}\nabla^r\bm{\chi}(\bm{\xi}_v^r)\cdot \hat{\bm{f}}_m^r(t)^T 
     + \frac{1}{2}\bm{\chi}(\bm{\xi}_v^r)^T\bm{W}\Tilde{\nabla}^r\bm{\chi}(\bm{\xi}_v^r) \cdot \hat{\bm{f}}_m(t)^T
    +\sum_{f=1}^{N_f}\sum_{k=1}^{N_{fp}} \bm{\chi}(\bm{\xi}_{f,k}^r)^T W_{f,k}[\hat{\bm{n}}^r\cdot \bm{f}_m^{C,r}]\\
    +\sum_{s,v,w}\frac{c_{(s,v,w)}}{2} \partial^{(s,v,w)}\bm{\chi}(\bm{\xi}_v^r)^T
    \bm{J}_m \bm{W} \partial^{(s,v,w)}\bm{\chi}(\bm{\xi}_v^r)\bm{\Pi}\Big[
    \bm{J}_m^{-1}\nabla^r\cdot \bm{\chi}(\bm{\xi}_v^r)\hat{\bm{f}}_m^r(t)^T
    +
     \bm{J}_m^{-1}\Tilde{\nabla}^r\bm{\chi}(\bm{\xi}_v^r) \cdot \hat{\bm{f}}_m(t)^T
    \Big]
    =\bm{0}^T.\label{eq: ESFR Classical equiv}
    \end{split}
\end{equation}

\noindent Eq.~(\ref{eq: ESFR Classical equiv}) is the filtered DG equivalent of the Classical ESFR split form presented in Eq.~(\ref{eq: ESFR split unstable}), with

\begin{equation}
    \begin{split}
    (\bm{K}_m)_{ij} \approx \sum_{s,v,w } c_{(s,v,w)} \int_{ {\Omega}_r} J_m^\Omega \partial^{(s,v,w)} \chi_i(\bm{\xi}^r) 
    \partial^{(s,v,w)}\chi_j(\bm{\xi}^r) d {\Omega_r}\\
    \to 
    \bm{K}_m = 
     \sum_{s,v,w} c_{(s,v,w)}
    \partial^{(s,v,w)}\bm{\chi}(\bm{\xi}_v^r) ^T \bm{W}\bm{J}_m
   \partial^{(s,v,w)}\bm{\chi}(\bm{\xi}_v^r)
   =\sum_{s,v,w } c_{(s,v,w)}\Big(\bm{D}_\xi^s \bm{D}_\eta^v\bm{D}_\zeta^w \Big)^T\bm{M}_m\Big(\bm{D}_\xi^s \bm{D}_\eta^v\bm{D}_\zeta^w \Big),
    \end{split}\label{eq:Km}
\end{equation}

\noindent where $\bm{D}_\xi^s=\Big(\bm{M}^{-1}\bm{S}_\xi\Big)^s$ is the strong form differential operator raised to the power $s$, and similarly for the other reference directions.

\begin{remark}
{\color{black}Note the inclusion of $\bm{J}_m$ within $\bm{K}_m$ in Eq.~(\ref{eq:Km}). It allows the broken Sobolev-norm $\bm{M}_m+\bm{K}_m$ to be symmetric positive definite (for values of $c_{1D}>c_{-}$). This naturally arises from the order of applying the differential operator, then integrating in physical space in Eq.~(\ref{eq: ESFR 3p deriv}), and re-defining the resultant curvilinear ESFR fundamental assumption Eq.~(\ref{eq: CURV ESFR fund assumpt}). This varies from the literature where the Jacobian was either a constant~\cite{allaneau_connections_2011,castonguay_phd,vincent_new_2011,jameson_non-linear_2012} or for curvilinear ESFR~\cite{mengaldo2016connections,zwanenburg_equivalence_2016} where the determinant of the Jacobian was left multiplied to Eq.~(\ref{eq: ESFR split unstable}). The $\partial^{(s,v,w)}$ derivative was then applied to the entire discretization (to have the $\partial^{(s,v,w)}$ derivative applied directly on the reference divergence operator), which would arise in the $\partial^{(s,v,w)}$ derivative of the determinant of the metric Jacobian $\bm{J}_m$ in the norm. Explicitly, $\partial^{(s,v,w)}\Big(\bm{J}_m \frac{d}{dt}\bm{u}_m^T\Big) \neq \bm{J}_m\partial^{(s,v,w)}\Big(\frac{d}{dt}\bm{u}_m^T\Big)$, and hence $\partial^{(s,v,w)}\bm{\chi}(\bm{\xi}_v^r)^T\bm{W}\partial^{(s,v,w)}\Big(\bm{J}_m\Big)$ is not a norm.\label{remark: Jac dependent sob norm}}
\end{remark}

\begin{remark}
{\color{black}The stated approach is unlike what is adopted in~\cite{allaneau_connections_2011,zwanenburg_equivalence_2016,ranocha2016summation} where $\bm{K}_m$ was constructed using the Legendre differential operator then transformed to the basis of the scheme.} {\color{black}Here $c_{(s,v,w)}$ must take the value from a normalized Legendre reference basis.}
\end{remark}
However, Eq.~(\ref{eq: ESFR Classical equiv}) is not provably stable since the final term does not vanish in the broken Sobolev-norm.

\begin{lemma}
Eq.~(\ref{eq: ESFR Classical equiv}) is equivalent to a DG scheme with the ESFR filter applied solely to the facet integral.
\label{lem:ESFR facet filet}
\end{lemma}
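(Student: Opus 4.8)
The plan is to show that Eq.~(\ref{eq: ESFR Classical equiv}) is nothing but the modified-mass ($\bm{M}_m+\bm{K}_m$) rewriting of the classical ESFR split form Eq.~(\ref{eq: ESFR split unstable}), in which the correction functions---and hence the ESFR filter---act only through the facet-coupling term. Since the passage from Eq.~(\ref{eq: ESFR split unstable}) to Eq.~(\ref{eq: ESFR Classical equiv}) was already carried out in the text (testing against the basis, applying the weighted $\partial^{(s,v,w)}$ operator, and invoking the curvilinear fundamental assumption Eq.~(\ref{eq: CURV ESFR fund assumpt}) to annihilate the correction-function volume contribution), it suffices to exhibit the reverse rearrangement and read off the facet-only filter. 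First I would abbreviate the DG split volume operator as $\bm{V}_m \coloneqq \frac{1}{2}\bm{\chi}(\bm{\xi}_v^r)^T\bm{W}\nabla^r\bm{\chi}(\bm{\xi}_v^r)\cdot\hat{\bm{f}}_m^r(t)^T + \frac{1}{2}\bm{\chi}(\bm{\xi}_v^r)^T\bm{W}\Tilde{\nabla}^r\bm{\chi}(\bm{\xi}_v^r)\cdot\hat{\bm{f}}_m(t)^T$, the DG facet term as $\bm{F}_m \coloneqq \sum_{f=1}^{N_f}\sum_{k=1}^{N_{fp}}\bm{\chi}(\bm{\xi}_{f,k}^r)^T W_{f,k}[\hat{\bm{n}}^r\cdot\bm{f}_m^{C,r}]$, and, using Eq.~(\ref{eq:Km}), recognise the last line of Eq.~(\ref{eq: ESFR Classical equiv}) as $\bm{K}_m\bm{\phi}_m$ with $\bm{\phi}_m \coloneqq \bm{\Pi}\big[\bm{J}_m^{-1}\nabla^r\cdot\bm{\chi}(\bm{\xi}_v^r)\hat{\bm{f}}_m^r(t)^T + \bm{J}_m^{-1}\Tilde{\nabla}^r\bm{\chi}(\bm{\xi}_v^r)\cdot\hat{\bm{f}}_m(t)^T\big]$ the projected physical split divergence.

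The central step is the operator identity $\bm{K}_m\bm{\phi}_m = \bm{K}_m\bm{M}_m^{-1}\bm{V}_m$. Granting it, I would add and subtract $\bm{K}_m\bm{M}_m^{-1}\bm{V}_m$ so that the three volume contributions of Eq.~(\ref{eq: ESFR Classical equiv}) collapse into $\bm{V}_m + \bm{K}_m\bm{M}_m^{-1}\bm{V}_m = (\bm{M}_m+\bm{K}_m)\bm{M}_m^{-1}\bm{V}_m$. Left-multiplying the entire equation by $(\bm{M}_m+\bm{K}_m)^{-1}$, which is well defined for $c_{1D}>c_{-}$ by Remark~\ref{remark: Jac dependent sob norm}, then yields $\frac{d}{dt}\hat{\bm{u}}_m(t)^T + \bm{M}_m^{-1}\bm{V}_m + (\bm{M}_m+\bm{K}_m)^{-1}\bm{F}_m = \bm{0}^T$. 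Comparing this with the DG split form Eq.~(\ref{eq: DG split}) left-multiplied by $\bm{M}_m^{-1}$, the volume discretisation $\bm{M}_m^{-1}\bm{V}_m$ is identical to DG, whereas the facet term is $(\bm{M}_m+\bm{K}_m)^{-1}\bm{F}_m = (\bm{M}_m+\bm{K}_m)^{-1}\bm{M}_m\,(\bm{M}_m^{-1}\bm{F}_m)$, i.e. the DG facet term acted on by the ESFR filter $(\bm{M}_m+\bm{K}_m)^{-1}\bm{M}_m$ and nothing else; this is the claimed equivalence.

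The main obstacle is the identity $\bm{K}_m\bm{\phi}_m = \bm{K}_m\bm{M}_m^{-1}\bm{V}_m$, because $\bm{\phi}_m$ and $\bm{M}_m^{-1}\bm{V}_m$ are in general distinct: with $\bm{\Pi}=\bm{M}^{-1}\bm{\chi}(\bm{\xi}_v^r)^T\bm{W}$ and $\bm{M}_m=\bm{\chi}(\bm{\xi}_v^r)^T\bm{W}\bm{J}_m\bm{\chi}(\bm{\xi}_v^r)$, the vector $\bm{\phi}_m$ is the metric-independent $\text{L}_2$ projection of the nodal physical split divergence while $\bm{M}_m^{-1}\bm{V}_m$ is its $\bm{J}_m$-weighted projection, and these coincide only where that divergence lies in the scheme's polynomial space---exactly for collocated cubature, where both projections reduce to interpolation, and otherwise only up to the over-integration error introduced by the determinant $J_m^\Omega$ sitting in the denominator. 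I would therefore anchor the identity to the common polynomial representation of the physical divergence that is fixed once in Eq.~(\ref{eq: ESFR split unstable})---where the correction-function divergence coefficients $\hat{\bm{h}}^{f,k}$ provably enter only the facet term and the volume operator is the unfiltered DG split divergence---so that $\bm{\phi}_m$ and $\bm{M}_m^{-1}\bm{V}_m$ are two names for the same projected field and their $\bm{K}_m$-weighted difference vanishes. Reversing the three derivation steps leading to Eq.~(\ref{eq: ESFR Classical equiv}) then certifies that the equivalence is bidirectional.
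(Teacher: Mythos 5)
Your proposal is correct and follows essentially the same route as the paper's proof, which passes through Eqs.~(\ref{eq: ESFR Classical equiv 2})--(\ref{eq: ESFR Classical equiv 4}): rewrite $\bm{\chi}(\bm{\xi}_v^r)^T\bm{W}=\bm{M}_m\bm{\Pi}_m\bm{J}_m^{-1}$ so that the volume contributions collapse into $(\bm{M}_m+\bm{K}_m)$ acting on a single projected split divergence, left-multiply by $(\bm{M}_m+\bm{K}_m)^{-1}$, and read off that only the facet term retains the filter; the projection swap that you isolate as the central identity is precisely the step the paper discharges by citing Chan~\cite[Theorem 4]{chan2019discretely}, so your ``anchoring'' discussion makes explicit what the paper handles by citation. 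One bookkeeping note: since $\bm{K}_m$ in Eq.~(\ref{eq:Km}) already carries the weights $c_{(s,v,w)}$, the last line of Eq.~(\ref{eq: ESFR Classical equiv}) equals $\tfrac{1}{2}\bm{K}_m\bm{\phi}_m$ rather than $\bm{K}_m\bm{\phi}_m$, and your central identity should correspondingly read $\tfrac{1}{2}\bm{K}_m\bm{\phi}_m=\bm{K}_m\bm{M}_m^{-1}\bm{V}_m$, i.e.\ $\bm{K}_m\bm{\Pi}\big[\bm{J}_m^{-1}\bm{u}\big]=\bm{K}_m\bm{\Pi}_m\big[\bm{J}_m^{-1}\bm{u}\big]$ for the unhalved nodal split divergence $\bm{u}$ --- the two factor-of-two slips cancel each other, so your collapse into $(\bm{M}_m+\bm{K}_m)\bm{M}_m^{-1}\bm{V}_m$ and the final conclusion are unaffected.
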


\begin{proof}

Rearranging Eq.~(\ref{eq: ESFR Classical equiv}) by substituting $\bm{\Pi}_m=\bm{M}_m^{-1}\bm{\chi}(\bm{\xi}_v^r)^T\bm{W}\bm{J}_m$, thus $\bm{\chi}(\bm{\xi}_v^r)^T\bm{W}=\bm{\chi}(\bm{\xi}_v^r)^T\bm{W}\bm{J}_m\bm{J}_m^{-1} = \bm{M}_m\bm{\Pi}_m\bm{J}_m^{-1}$, and using Chan~\cite[Theorem 4]{chan2019discretely} results in,

\begin{equation}
    \begin{split}
     \Big(\bm{M}_m+\bm{K}_m\Big)\frac{d}{d t} \hat{\bm{u}}_m(t)^T
    + \frac{1}{2}\bm{M}_m\bm{\Pi}_m
    \Big[\bm{J}_m^{-1}\nabla^r\bm{\chi}(\bm{\xi}_v^r)\cdot \hat{\bm{f}}_m^r(t)^T 
    +\bm{J}_m^{-1}\Tilde{\nabla}^r\bm{\chi}(\bm{\xi}_v^r) \cdot \hat{\bm{f}}_m(t)^T\Big]
    +\sum_{f=1}^{N_f}\sum_{k=1}^{N_{fp}} \bm{\chi}(\bm{\xi}_{f,k}^r)^T W_{f,k}[\hat{\bm{n}}^r\cdot \bm{f}_m^{C,r}]\\
    +\frac{1}{2} \bm{K}_m\bm{\Pi}_m\Big[ 
    \bm{J}_m^{-1}\nabla^r\bm{\chi}(\bm{\xi}_v^r)\cdot \hat{\bm{f}}_m^r(t)^T
    +
     \bm{J}_m^{-1}\Tilde{\nabla}^r\bm{\chi}(\bm{\xi}_v^r) \cdot \hat{\bm{f}}_m(t)^T
    \Big]
    =\bm{0}^T,\label{eq: ESFR Classical equiv 2}
    \end{split}
\end{equation}

\noindent which simplifies to

\begin{equation}
    \begin{split}
  \Big(\bm{M}_m+\bm{K}_m\Big)\frac{d}{d t} \hat{\bm{u}}_m(t)^T
    + \frac{1}{2}\Big(\bm{M}_m +\bm{K}_m \Big)\bm{\Pi}_m
    \Big[\bm{J}_m^{-1}\nabla^r\bm{\chi}(\bm{\xi}_v^r)\cdot \hat{\bm{f}}_m^r(t)^T 
    +\bm{J}_m^{-1}\Tilde{\nabla}^r\bm{\chi}(\bm{\xi}_v^r) \cdot \hat{\bm{f}}_m(t)^T\Big]\\
    +\sum_{f=1}^{N_f}\sum_{k=1}^{N_{fp}} \bm{\chi}(\bm{\xi}_{f,k}^r)^T W_{f,k}[\hat{\bm{n}}^r\cdot \bm{f}_m^{C,r}]
    =\bm{0}^T.\label{eq: ESFR Classical equiv 3}
    \end{split}
\end{equation}

\noindent Recalling the definition of $\bm{\Pi}_m=\bm{M}_m^{-1}\bm{\chi}(\bm{\xi}_v^r)^T\bm{W}\bm{J}_m$ and solving for $\frac{d}{dt}\hat{\bm{u}}_m(t)^T$ in Eq.~(\ref{eq: ESFR Classical equiv 3}) results in,

\begin{equation}
    \begin{split}
  \frac{d}{d t} \hat{\bm{u}}_m(t)^T
    + \frac{1}{2}\bm{M}_m^{-1}\bm{\chi}(\bm{\xi}_v^r)^T\bm{W}
    \Big[\nabla^r\bm{\chi}(\bm{\xi}_v^r)\cdot \hat{\bm{f}}_m^r(t)^T 
    +\Tilde{\nabla}^r\bm{\chi}(\bm{\xi}_v^r) \cdot \hat{\bm{f}}_m(t)^T\Big]
    +\Big(\bm{M}_m+\bm{K}_m\Big)^{-1}\sum_{f=1}^{N_f}\sum_{k=1}^{N_{fp}} \bm{\chi}(\bm{\xi}_{f,k}^r)^T W_{f,k}[\hat{\bm{n}}^r\cdot \bm{f}_m^{C,r}]
    =\bm{0}^T,\label{eq: ESFR Classical equiv 4}
    \end{split}
\end{equation}

\noindent which concludes the proof since the ESFR filter is only applied to the facet integral in Eq.~(\ref{eq: ESFR Classical equiv 4}).

\end{proof}

The proof in Lemma~\ref{lem:ESFR facet filet} shows that Eq.~(\ref{eq: ESFR Classical equiv}) recovers the divergence of the correction functions applied solely to the face in Eq.~(\ref{eq: ESFR Classical equiv 4}), as seen in the literature~\cite{allaneau_connections_2011,zwanenburg_equivalence_2016}. That is, from Allaneau and Jameson~\cite{allaneau_connections_2011}, and Zwanenburg and Nadarajah~\cite[Eq. 2.19]{zwanenburg_equivalence_2016},

\begin{equation}
    \bm{\Pi}\bm{J}_m^{-1}\bm{\chi}(\bm{\xi}^r_v)\Big(\hat{\bm{h}}^{f,k} \Big)^T = \Big(\bm{M}_m+\bm{K}_m\Big)^{-1} \bm{\chi}(\bm{\xi}_{f,k}^r)^T W_{f,k}.
\end{equation}

\noindent 
{\color{black}Explicitly, Eqs.~(\ref{eq: ESFR split unstable}),~(\ref{eq: ESFR Classical equiv}), and~(\ref{eq: ESFR Classical equiv 4}) are all equivalent expressions of ESFR.}

\subsection{ESFR - Proposed Nonlinearly Stable Flux Reconstruction}\label{sec: ESFR Prov Stable}

As shown by Cicchino \textit{et al.}~\cite{CicchinoNonlinearlyStableFluxReconstruction2021}, provable nonlinear stability can be established for FR schemes by incorporating the ESFR filter/divergence of the correction functions on the volume integrals. This results in our proposed ESFR split form,

\begin{equation}
    \begin{split}
     \frac{d}{d t} \hat{\bm{u}}_m(t)^T
    + \frac{1}{2}\Big(\bm{M}_m+\bm{K}_m\Big)^{-1}\bm{\chi}(\bm{\xi}_v^r)^T\bm{W}\Big[\nabla^r\bm{\chi}(\bm{\xi}_v^r)\cdot \hat{\bm{f}}_m^r(t)^T 
   +\Tilde{\nabla}^r\bm{\chi}(\bm{\xi}_v^r) \cdot  \hat{\bm{f}}_m(t)^T \Big] 
   +\Big(\bm{M}_m+\bm{K}_m\Big)^{-1}\sum_{f=1}^{N_f}\sum_{k=1}^{N_{fp}} 
   \bm{\chi}(\bm{\xi}_{f,k}^r)^T W_{f,k}[\hat{\bm{n}}^r\cdot \bm{f}_m^{C,r}]=\bm{0}^T.\label{eq: ESFR equiv dudt}
    \end{split}
\end{equation}

\noindent Or in equivalent form which simplifies the stability and conservation analysis,

\begin{equation}
\begin{split}
     \Big(\bm{M}_m+\bm{K}_m\Big)\frac{d}{d t} \hat{\bm{u}}_m(t)^T
    + \frac{1}{2}\bm{\chi}(\bm{\xi}_v^r)^T\bm{W}\Big[\nabla^r\bm{\chi}(\bm{\xi}_v^r)\cdot \hat{\bm{f}}_m^r(t)^T 
   +\Tilde{\nabla}^r\bm{\chi}(\bm{\xi}_v^r) \cdot  \hat{\bm{f}}_m(t)^T \Big]
    +\sum_{f=1}^{N_f}\sum_{k=1}^{N_{fp}} \bm{\chi}(\bm{\xi}_{f,k}^r)^T W_{f,k}[\hat{\bm{n}}^r\cdot \bm{f}_m^{C,r}]=\bm{0}^T.\label{eq: ESFR equiv}
    \end{split}
\end{equation}

\noindent 
Eq.~(\ref{eq: ESFR equiv dudt}) is on design order as proved in Sec.~\ref{sec:ESFR Recovering Existing Schemes}.

\begin{remark}
We present the equivalent form of Eq.~(\ref{eq: ESFR equiv dudt}) in SBP notation in Sec.~\ref{sec:SBP deriv} based on~\cite{chan2019skew}.
\end{remark}




We note that the computational implementation of the proposed ESFR schemes differ significantly from existing FR implementations in the literature~\cite{witherden2014pyfr}. On general curved meshes and for general quadrature rules, the FR norm matrix over each element is dense. 
Thus, when implementing the proposed energy stable FR scheme, the FR norm matrix must be constructed and inverted over each individual element. On affine elements, the inverse of each elemental norm matrix can be computed through a constant scaling of a single reference norm matrix, \textcolor{black}{as is typically done in ESFR by solving for the correction functions}~\cite{castonguay2012newTRI,williams_energy_2014}. 
However, for a static curved mesh, these matrix inverses can be precomputed and stored. This increases both storage costs and the number of memory transfers necessary for the proposed schemes.

In comparison, the most common FR schemes~\cite{castonguay2012newTRI,vincent_new_2011,huynh_flux_2007,wang2009unifying} avoid introducing a mass/norm matrix altogether by formulating the main computational steps of the scheme as operations on the reference element. Collocated DG schemes (and the equivalent FR schemes) on curved elements yield a trivially invertible diagonal mass (norm) matrix, with values of the determinant of the metric Jacobian at collocation points appearing as weights for each diagonal entry. For dense mass (norm) matrices appearing in high-order DG on curved meshes, it is possible to approximate the inverse in an efficient, energy stable, and high order accurate fashion using a weight-adjusted approximation to the mass matrix~\cite{chan2019discretely}. However, because the norm matrices constructed in this work are constructed as the sum of two matrices, 
it is not currently possible to directly apply such an approach.

\subsection{ESFR - Accuracy of Metric Dependent ESFR Schemes}
\label{sec:ESFR Recovering Existing Schemes}

Following the work of~\cite{allaneau_connections_2011,zwanenburg_equivalence_2016,Cicchino2020NewNorm}, we consider a normalized, $p$-th order Legendre reference basis \newline $\bm{\chi}_{ref}(\bm{\xi}^r)=\bm{\chi}_{ref}(\xi)\otimes\bm{\chi}_{ref}(\eta)\otimes\bm{\chi}_{ref}(\zeta)$ on $\bm{\xi}^r\in[-1,1]^3$. The motivation behind using an orthonormal reference basis rather than an orthogonal reference basis is that it allows $\bm{K}_m$ to be constructed directly with the differential operator and mass matrix of the scheme~\cite[Sec. 3.1]{Cicchino2020NewNorm}. Thus, we introduce the transformation operator $\bm{T}=\bm{\Pi}_{ref}\bm{\chi}(\bm{\xi}^r_v)$, where $\bm{\Pi}_{ref}=\bm{M}_{ref}^{-1}\bm{\chi}_{ref}(\bm{\xi}_v^r)^T\bm{W}$, such that $\bm{K}_m=\bm{T}^T\bm{K}_{m,ref}\bm{T}$.

Next, we explicitly formulate $\bm{K}_{m,ref}$ to derive the metric Jacobian dependent ESFR filter. To express $\bm{K}_{m,ref}$ we introduce the modal differential operators for a normalized Legendre reference basis $\hat{\bm{D}}_\xi^s=(\bm{M}_{ref}^{-1}\bm{S}_{\xi,ref})^s$, similarly for $\hat{\bm{D}}_\eta^v$ and $\hat{\bm{D}}_\zeta^w$, 
to result in,

\begin{equation}
\begin{split}
\bm{K}_{m,ref} = \sum_{s,v,w} c_{(s,v,w)} 
    \partial^{(s,v,w)}\bm{\chi}_{ref}(\bm{\xi}_v^r)^T \bm{W}\bm{J}_m
    \partial^{(s,v,w)}\bm{\chi}_{ref}(\bm{\xi}_v^r)
    =\sum_{s,v,w} c_{(s,v,w)} \Big(\hat{\bm{D}}_{\xi}^s \hat{\bm{D}}_{\eta}^v \hat{\bm{D}}_{\zeta}^w \Big)^T \bm{M}_{m,ref} \Big(\hat{\bm{D}}_{\xi}^s \hat{\bm{D}}_{\eta}^v \hat{\bm{D}}_{\zeta}^w \Big)\\
  \implies   (\bm{K}_{m,ref})_{ij} \approx \sum_{s,v,w} c_{(s,v,w)} \int_{\bm{\Omega}_r} J_m^\Omega
    \partial^{(s,v,w)}\chi_{ref,i}(\bm{\xi}^r)\partial^{(s,v,w)}\chi_{ref,j}(\bm{\xi}^r) d\bm{\Omega}_r
\end{split}\label{eq: Km ref}
\end{equation}

Typically when deriving the correction functions~\cite{vincent_new_2011,castonguay2012newTRI,witherden2016high} or ESFR filter~\cite{allaneau_connections_2011,zwanenburg_equivalence_2016,Cicchino2020NewNorm}, we would utilize the orthogonality of the reference basis functions. However, for curvilinear coordinates, the reference basis functions are not orthogonal on $d \bm{\Omega}_m={J}_m^\Omega d\bm{\Omega}_r$. That is,

\begin{equation}
\begin{split}
    \int_{\bm{\Omega}_r} \chi_{ref,i}(\bm{\xi}^r)\chi_{ref,j}(\bm{\xi}^r)d\bm{\Omega}_r =\delta_{ij},
    \end{split}
\end{equation}
    \noindent but, 
\begin{equation}
\begin{split}
    \int_{\bm{\Omega}_r} J_m^\Omega \chi_{ref,i}(\bm{\xi}^r)\chi_{ref,j}(\bm{\xi}^r) d\bm{\Omega}_r\neq \alpha \delta_{ij},\: \text{where } \alpha=\text{const},\:\text{unless } J_m^\Omega=\text{const},
\end{split}\label{eq: orthog basis def}
\end{equation}

\noindent where the last inequality holds even under exact integration and the analytically exact $J_m^\Omega$. An equality would be present in Eq.~(\ref{eq: orthog basis def}) if and only if $\bm{\chi}_{ref}(\bm{\Theta}_m^{-1}(\bm{x}^c))$ is also an orthogonal polynomial basis; but thus far there is no claim that $\bm{\chi}_{ref}(\bm{\Theta}_m^{-1}(\bm{x}^c))$ or $\bm{\chi}(\bm{\Theta}_m^{-1}(\bm{x}^c))$ are polynomial in the analysis. Using Eq.~(\ref{eq: orthog basis def}) in Eq.~(\ref{eq: Km ref}) directly shows that for a tensor-product basis, $\bm{K}_{m,ref}$ is not diagonal for curvilinear coordinates, and is diagonal only under the constant metric Jacobian case. 

To prove the order of accuracy for curvilinear ESFR schemes, we demonstrate which modes the ESFR filter, $\bm{F}_{m,ref}$, operates on; such that $\frac{d }{dt}\hat{\bm{u}}_{ref}(t)^T\bigg|_{\text{ESFR}} = \bm{F}_{m,ref}\frac{d }{dt}\hat{\bm{u}}_{ref}(t)^T\bigg|_{\text{DG}}$.

\begin{thrm}
For general curvilinear coordinates, the ESFR filter operator is applied to all modes of the discretization, not just the highest order mode; even for triangular/prismatic elements~\cite{castonguay2012newTRI,witherden2014analysis}, the $3p$-th broken Sobolev-norm considered in~\cite{Cicchino2020NewNorm}, and all other cases where the corresponding $\bm{K}_{m,ref}$ would be diagonal with a single entry on the highest mode.\label{thm: filter all modes}
\end{thrm}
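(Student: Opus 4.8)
The plan is to make the filter operator explicit and then exploit the rank-one structure of $\bm{K}_{m,ref}$ in the very cases the theorem isolates. First I would read the filter off the two discretizations: since the proposed ESFR split form Eq.~(\ref{eq: ESFR equiv}) and the DG split form Eq.~(\ref{eq: DG split}) share identical volume and facet flux terms, their nodal time derivatives are $-\bm{M}_m^{-1}\bm{r}_m$ and $-(\bm{M}_m+\bm{K}_m)^{-1}\bm{r}_m$ for a common right-hand side $\bm{r}_m$. Hence, in the normalized Legendre reference basis, $\bm{F}_{m,ref}=(\bm{M}_{m,ref}+\bm{K}_{m,ref})^{-1}\bm{M}_{m,ref}$.

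Next I would collapse $\bm{K}_{m,ref}$ to rank one in the stated cases. Writing $\bm{D}=\hat{\bm{D}}_\xi^p\hat{\bm{D}}_\eta^p\hat{\bm{D}}_\zeta^p$, the $p$-th modal derivative in each direction annihilates every mode except the degree-$p$ one and maps the top tensor mode $\bm{e}_{N_p}$ to the constant mode $\bm{e}_1$, so $\bm{D}=\alpha\,\bm{e}_1\bm{e}_{N_p}^T$ for a scalar $\alpha$. Substituting into Eq.~(\ref{eq: Km ref}) gives $\bm{K}_{m,ref}=\bm{D}^T\bm{M}_{m,ref}\bm{D}\,c_{(p,p,p)}=\kappa\,\bm{e}_{N_p}\bm{e}_{N_p}^T$ with $\kappa=c_{(p,p,p)}\alpha^2(\bm{M}_{m,ref})_{11}$; the same collapse holds for the triangular/prismatic operators $\bm{D}^{p,v,w}$ and for any norm whose $\bm{K}_{m,ref}$ carries a single highest-mode entry.

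I would then apply the Sherman--Morrison identity to obtain
\begin{equation}
\bm{F}_{m,ref}=\bm{I}-\frac{\kappa\,\big(\bm{M}_{m,ref}^{-1}\bm{e}_{N_p}\big)\bm{e}_{N_p}^T}{1+\kappa\,\bm{e}_{N_p}^T\bm{M}_{m,ref}^{-1}\bm{e}_{N_p}}.\nonumber
\end{equation}
The correction is rank one with range spanned by $\bm{M}_{m,ref}^{-1}\bm{e}_{N_p}$. In the affine case $\bm{M}_{m,ref}=J^\Omega\bm{I}$ is diagonal by Eq.~(\ref{eq: orthog basis def}), so $\bm{M}_{m,ref}^{-1}\bm{e}_{N_p}\propto\bm{e}_{N_p}$ and $\bm{F}_{m,ref}$ modifies only the highest mode, recovering the classical picture. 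The entire claim thus reduces to showing $\bm{M}_{m,ref}^{-1}\bm{e}_{N_p}\not\propto\bm{e}_{N_p}$ for general curvilinear $J_m^\Omega$.

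This last point is the \emph{main obstacle}, and I would settle it with an eigenvector argument: $\bm{M}_{m,ref}^{-1}\bm{e}_{N_p}\propto\bm{e}_{N_p}$ holds iff $\bm{e}_{N_p}$ is an eigenvector of $\bm{M}_{m,ref}$, i.e.\ iff the off-diagonal entries of its top column, $(\bm{M}_{m,ref})_{i,N_p}=\int_{\bm{\Omega}_r}J_m^\Omega\chi_{ref,i}\chi_{ref,N_p}\,d\bm{\Omega}_r$, vanish for all $i\neq N_p$. By Eq.~(\ref{eq: orthog basis def}) this weighted orthogonality fails whenever $J_m^\Omega$ is non-constant, \emph{even} under exact integration and analytically exact metrics. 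Consequently the correction column has support beyond the top mode, so $\bm{F}_{m,ref}$ couples the highest mode into the lower ones. The delicate final step is upgrading \enquote{some lower modes are coupled} to \enquote{all modes}: I would argue that the vanishing of any single entry of $\bm{M}_{m,ref}^{-1}\bm{e}_{N_p}$ is a non-generic algebraic constraint on the analytic metric terms, so for general curvilinear elements the full column is populated and the filter indeed acts on every mode of the discretization.
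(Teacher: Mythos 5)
Your proposal is correct, and it rests on the same two pillars as the paper's proof: the filter is $\bm{F}_{m,ref}=(\bm{M}_{m,ref}+\bm{K}_{m,ref})^{-1}\bm{M}_{m,ref}$, and Eq.~(\ref{eq: orthog basis def}) forces $\bm{M}_{m,ref}$ to be dense whenever $J_m^\Omega$ is non-constant, even under exact integration and analytically exact metrics. The execution, however, differs in a worthwhile way. The paper reduces to a schematic $2\times 2$ block, setting $\bm{M}_{m,ref}=\bigl(\begin{smallmatrix} a&b\\ b&c \end{smallmatrix}\bigr)$ and $\bm{K}_{m,ref}=\bigl(\begin{smallmatrix} 0&0\\ 0&d \end{smallmatrix}\bigr)$, and computes $\bm{F}_{m,ref}=\bigl(\begin{smallmatrix} 1&\frac{bd}{a(c+d)-b^2}\\ 0&\frac{ac-b^2}{a(c+d)-b^2} \end{smallmatrix}\bigr)$ directly; the nonzero $(1,2)$ entry is the whole argument. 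You instead work at full dimension: you first derive, rather than posit, the hypothesis that $\bm{K}_{m,ref}$ is rank one in the isolated cases (via $\hat{\bm{D}}_\xi^p\hat{\bm{D}}_\eta^p\hat{\bm{D}}_\zeta^p=\alpha\,\bm{e}_1\bm{e}_{N_p}^T$ in the modal Legendre basis, which is correct), then apply Sherman--Morrison to obtain the exact perturbation of the identity, whose only nonzero column is proportional to $\bm{M}_{m,ref}^{-1}\bm{e}_{N_p}$. Restricting your formula to two modes reproduces the paper's matrix exactly, so your proof strictly subsumes theirs, and it buys two things the paper does not make explicit: a clean eigenvector criterion ($\bm{F}_{m,ref}$ touches only the top mode iff $\bm{e}_{N_p}$ is an eigenvector of $\bm{M}_{m,ref}$, which immediately recovers the affine case), and an honest statement of the genericity needed to upgrade \enquote{some lower modes} to \enquote{all modes}. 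On that last point you are no worse off than the paper, which silently assumes $b\neq 0$ in its $2\times 2$ block; your observation that the vanishing of an entry of $\bm{M}_{m,ref}^{-1}\bm{e}_{N_p}$ is a non-generic constraint on the metric terms is exactly the assumption the published proof makes implicitly.
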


\begin{proof}
We substitute Eq.~(\ref{eq: orthog basis def}) when constructing the metric Jacobian dependent mass matrix for a normalized Legendre reference basis,

\begin{equation}
    (\bm{M}_{m,ref})_{ij} \approx \int_{\bm{\Omega}_r} J_m^\Omega \chi_{ref,i}(\bm{\xi}^r)\chi_{ref,j}(\bm{\xi}^r) d\bm{\Omega}_r\neq \alpha \delta_{ij},\: \text{where } \alpha=\text{const},
\end{equation}

\noindent that shows the reference mass matrix is dense, even with exact integration and analytically exact metric terms. Thus, we let $\bm{M}_{m,ref}=\begin{bmatrix}
    a&b\\b&c
    \end{bmatrix}$ and $\bm{K}_{m,ref}=\begin{bmatrix}
0&0\\0&d
\end{bmatrix}$ to consider the special case where the correction functions are only applied on the highest mode, \textit{i.e.} prismatic/triangular curvilinear elements and tensor-product curvilinear elements using the $3p$-th broken Sobolev-norm~\cite{Cicchino2020NewNorm}. 
This implies $ \Big( \bm{M}_{m,ref}+\bm{K}_{m,ref}\Big)^{-1}=\frac{1}{a(c+d)-b^2}\begin{bmatrix}
c+d&-b\\-b&a
\end{bmatrix}$. Thus \newline $\bm{F}_{m,ref}=\Big( \bm{M}_{m,ref}+\bm{K}_{m,ref}\Big)^{-1}\bm{M}_{m,ref} = \begin{bmatrix}
1&\frac{bd}{a(c+d)-b^2}\\0&\frac{ac-b^2}{a(c+d)-b^2}
\end{bmatrix}$. Therefore, considering the complete case for \newline $\bm{F}_{m,ref}=\Big( \bm{M}_{m,ref}+\bm{K}_{m,ref}\Big)^{-1}\bm{M}_{m,ref}$
implies the filter has influence on all modes, rather than just the highest mode, which varies from the literature for linear grids~\cite{allaneau_connections_2011,zwanenburg_equivalence_2016,Cicchino2020NewNorm}.
\end{proof}

Typically in the ESFR literature~\cite{huynh_flux_2007,jameson_non-linear_2012,wang2009unifying,vincent_new_2011,castonguay2012newTRI}, the scheme is shown to lose at most one order of accuracy because the divergence of the correction functions correspond to the highest mode of the scheme. Unfortunately, this is only true for constant metric Jacobians, or {\color{black}non-positive-definite} 
norms as discussed in Remark~\ref{remark: Jac dependent sob norm}. Theorem~\ref{thm: filter all modes} directly proves that ESFR schemes can lose all orders for general curvilinear coordinates; even without considering our proposed ESFR split form and instead considering the classical VCJH schemes with or without exact integration, and/or with or without analytically exact or discrete metric terms. This result is dependent on the metric dependence within the ESFR fundamental assumption, Eq.~(\ref{eq: CURV ESFR fund assumpt}), in curvilinear coordinates.
In Section~\ref{sec:ESFR deriv test}, we numerically show for three-dimensions that the scheme loses all orders at the approximate location~\cite[Figure 3.6]{castonguay_phd} that the one-dimensional ESFR/VCJH schemes lose one order of accuracy; at a value of $c\gg c_+$.

\section{Discrete GCL}\label{sec: Discrete GCL}

In this section we briefly review how to compute $\bm{C}_m$ to ensure both the correct orders of accuracy, free-stream preservation and surface metric terms being consistent between interior and exterior cells. The main idea from Kopriva~\cite{kopriva2006metric} was to satisfy the GCL (Eq.~(\ref{eq: GCL})) \textit{a priori} by equivalently expressing the reference vector basis multiplied by the determinant of the Jacobian in curl form. With the interpolation being within the curl, discrete GCL is satisfied since it is the divergence of the curl. That is, by expressing the reference transformation (metric cofactor matrix) as,

\begin{equation}
\begin{split}
  \text{(Cross Product Form) } J^\Omega \bm{a}^i = \bm{a}_j \times \bm{a}_k\text{, }i=1,2,3\text{ }(i,j,k)\text{ cyclic,}\\
  \Leftrightarrow  \text{(Conservative Curl Form) }  J^\Omega (\bm{a}^i)_n = -\hat{\bm{e}}_i \cdot \nabla^r\times\Big(\bm{x}_l^c\nabla^r \bm{x}_m^c \Big)\text{, }i=1,2,3\text{, }n=1,2,3\text{ }(n,m,l)\text{ cyclic,}\\
  \Leftrightarrow   \text{(Invariant Curl Form) }   J^\Omega (\bm{a}^i)_n = -\frac{1}{2}\hat{\bm{e}}_i \cdot \nabla^r\times\Big(\bm{x}_l^c\nabla^r \bm{x}_m^c  - \bm{x}_m^c\nabla^r \bm{x}_l^c \Big)\text{, }i=1,2,3\text{, }n=1,2,3\text{ }(n,m,l)\text{ cyclic,}
    \end{split}\nonumber
\end{equation}

\noindent where $\hat{\bm{e}} = [\hat{x}, \hat{y}, \hat{z}]$ is the physical unit vector~\footnote{this is not to be confused with the previous definition of $\hat{\bm{x}}^c$ which are the mapping support points (grid points)}.
Then for the conservative curl form, the GCL can be written as,

\begin{equation}
    \sum_{i=1}^3 \frac{\partial (J^\Omega (\bm{a}^i)_n)}{\partial \xi^i} = -\nabla^r\cdot
    \Big( \nabla^r\times\Big(\bm{x}_l^c\nabla^r \bm{x}_m^c \Big) \Big)=0\text{, }n=1,2,3\text{ }(n,m,l)\text{ cyclic,}\nonumber
\end{equation}

\noindent and similarly for the invariant curl form. Thus Kopriva~\cite{kopriva2006metric} proved that to discretely satisfy GCL \textit{a priori}, one must interpolate to the flux nodes (volume or facet cubature nodes) before applying the curl. That is, the discrete conservative curl form reads as,

\begin{equation}
    J^\Omega (\bm{a}^i)_n = (\bm{C})_{ni} = -\hat{\bm{e}}_i \cdot \nabla^r\times\bm{\Theta}(\bm{\xi}^r)\Big(\bm{x}_l^c\nabla^r \bm{x}_m^c \Big)\text{, }i=1,2,3\text{, }n=1,2,3\text{ }(n,m,l)\text{ cyclic,}\label{eq: conservative curl Kopriva}
\end{equation}

\noindent and similarly for the invariant curl form. For general three-dimensional curvilinear elements, Kopriva~\cite{kopriva2006metric} also proved that the cross product form does not discretely satisfy GCL, thus the conservative or invariant curl forms should always be used.

One of the primary issues raised by Abe \etal~\cite{abe2015freestream} was that Eq.~(\ref{eq: conservative curl Kopriva}) does not ensure that the normals match at each facet cubature node. It is to be noted that Abe \etal~\cite{abe2015freestream} considered only the invariant curl form, but the methodology is also consistent for the conservative curl form. To circumvent the issue, one main result from~\cite{abe2015freestream} was to have two separate interpolation operators, one for the \enquote{grid points} (mapping support points) and another for the cubature (flux) nodes. This distinction was made because in high-order grid generation, it is typical to have the exact corners of the elements, making them continuous finite elements at the grid points~\cite{johnen2013geometrical,turner2018curvilinear}. Thus, Abe \etal~\cite[Eqs. (31)-(34), (41) and (42)]{abe2015freestream} evaluated $\Big(\bm{x}_l^c\nabla^r \bm{x}_m^c \Big)$ in Eq.~(\ref{eq: conservative curl Kopriva}) 
at the grid nodes, and computed the mapping shape functions 
at the flux nodes prior to the application of the curl operator~\cite[Eq. (43)]{abe2015freestream}. By doing so, consistency is ensured at each face since the grid nodes are continuous; and GCL is satisfied at each quadrature point because the final interpolation is performed within the curl operator. Therefore, we have the discrete conservative curl form,

\begin{equation}
    \begin{split}
    (\bm{C})_{ni}= -\hat{\bm{e}}_i \cdot \nabla^r\times\bm{\Theta}(\bm{\xi}_{\text{flux nodes}}^r)
    \Big[
    \bm{\Theta}(\bm{\xi}_{\text{grid nodes}}^r)\hat{\bm{x}}_l^{c^T}
    \nabla^r \bm{\Theta}(\bm{\xi}_{\text{grid nodes}}^r)\hat{\bm{x}}_m^{c^T} \Big]
    \text{, }
    i=1,2,3\text{, }n=1,2,3\text{ }(n,m,l)\text{ cyclic,}
    \end{split}\label{eq: discrete cons curl abe}
\end{equation}

\noindent and similarly for the discrete invariant curl form,

\begin{equation}
    \begin{split}
          (\bm{C})_{ni}= -\frac{1}{2}\hat{\bm{e}}_i \cdot \nabla^r\times\bm{\Theta}(\bm{\xi}_{\text{flux nodes}}^r)\Big[
          \bm{\Theta}(\bm{\xi}_{\text{grid nodes}}^r)\hat{\bm{x}}_l^{c^T}
    \nabla^r \bm{\Theta}(\bm{\xi}_{\text{grid nodes}}^r)\hat{\bm{x}}_m^{c^T}
     -
     \bm{\Theta}(\bm{\xi}_{\text{grid nodes}}^r)\hat{\bm{x}}_m^{c^T}
    \nabla^r \bm{\Theta}(\bm{\xi}_{\text{grid nodes}}^r)\hat{\bm{x}}_l^{c^T}\Big]
          \text{, }\\i=1,2,3\text{, }n=1,2,3\text{ }(n,m,l)\text{ cyclic,}
    \end{split}\label{eq: discrete inv curl abe}
\end{equation}

\noindent where we assumed the mapping shape functions are collocated on the mapping support points $\hat{\bm{x}}^c$. In all numerical results we used Eq.~(\ref{eq: discrete cons curl abe}) with GLL as the grid nodes.

\section{Free-Stream Preservation, Conservation and Stability}

{\color{black}{In this section, we prove free-stream preservation, conservation, and stability for our proposed provably stable FR split form, Eq.~(\ref{eq: ESFR equiv dudt}). For free-stream preservation, we prove that it is essential to distinguish between grid nodes and flux nodes in Eqs.~(\ref{eq: discrete cons curl abe}) and~(\ref{eq: discrete inv curl abe}). Then, by satisfying GCL, we demonstrate conservation. Lastly, to illustrate the stability of the scheme, we show that it is essential to incorporate the divergence of the correction functions on the volume terms.}}

\subsection{Free-Stream Preservation}\label{sec:Free-stream}
We first demonstrate that the surface splitting from Eq.~(\ref{eq: ESFR equiv dudt}) satisfies free-stream preservation if the metric terms are computed via Eq.~(\ref{eq: discrete cons curl abe}) or~(\ref{eq: discrete inv curl abe}). We start by substituting $\bm{f}_m=\bm{\alpha}=$ constant and $\frac{d}{d t} \hat{\bm{u}}_m(t)^T=\bm{0}^T$ into Eq.~(\ref{eq: ESFR equiv dudt}),

\begin{equation}
    \begin{split}
    \frac{1}{2}\Big(\bm{M}_m+\bm{K}_m\Big)^{-1}\bm{\chi}(\bm{\xi}_v^r)^T\bm{W}\Big[\nabla^r\cdot \bm{\alpha}\bm{C}_m(\bm{\xi}_v^r) 
   +\Tilde{\nabla}^r \cdot \bm{\alpha} \Big] \\
   +\Big(\bm{M}_m+\bm{K}_m\Big)^{-1}\sum_{f=1}^{N_f}\sum_{k=1}^{N_{fp}} 
   \bm{\chi}(\bm{\xi}_{f,k}^r)^T W_{f,k}[\hat{\bm{n}}^r\bm{C}_m(\bm{\xi}_{f,k}^r)^T\cdot (\bm{\alpha}-\frac{1}{2}\bm{\alpha}) - \frac{1}{2}\hat{\bm{n}}^r\cdot \bm{\chi}(\bm{\xi}_{f,k}^r)\bm{\Pi}(\bm{\alpha}\bm{C}_m(\bm{\xi}_{v}^r))].
    \end{split}
\end{equation}

\noindent Factoring out the constant, utilizing GCL Eq.~(\ref{eq: GCL}), and the divergence of a constant is zero we are left with,

\begin{equation}
    \begin{split}
  \implies \Big(\bm{M}_m+\bm{K}_m\Big)^{-1}\sum_{f=1}^{N_f}\sum_{k=1}^{N_{fp}} 
   \bm{\chi}(\bm{\xi}_{f,k}^r)^T W_{f,k}[\frac{1}{2}\hat{\bm{n}}^r\bm{C}_m(\bm{\xi}_{f,k}^r)^T\cdot \bm{1} - \frac{1}{2}\hat{\bm{n}}^r\cdot \bm{\chi}(\bm{\xi}_{f,k}^r)\bm{\Pi}(\bm{1}\bm{C}_m(\bm{\xi}_{v}^r))].
    \end{split}\label{eq: free-stream1}
\end{equation}

\noindent Since the metrics computed in Eq.~(\ref{eq: discrete cons curl abe}) or~(\ref{eq: discrete inv curl abe}) are computed at a continuous set of grid nodes (included on the boundary), with only the last interpolation performed at the flux nodes, 

\begin{equation}
\begin{split}
   \Big( \bm{\chi}(\bm{\xi}_{f,k}^r)\bm{\Pi}(\bm{1}\bm{C}_m(\bm{\xi}_{v}^r))\Big)_{ni}
    =-\hat{\bm{e}}_i \cdot \bm{\Theta}(\bm{\xi}_{f,k}^r) \bm{\Theta}(\bm{\xi}_{v}^r)^{-1} \bm{\Theta}(\bm{\xi}_{v}^r)\nabla^r\times\bm{\Theta}(\bm{\xi}_{\text{grid nodes}}^r)
    \Big[
    \bm{\Theta}(\bm{\xi}_{\text{grid nodes}}^r)\hat{\bm{x}}_l^{c^T}
    \nabla^r \bm{\Theta}(\bm{\xi}_{\text{grid nodes}}^r)\hat{\bm{x}}_m^{c^T} \Big]
    \text{, }\\
    i=1,2,3\text{, }n=1,2,3\text{ }(n,m,l)\text{ cyclic,}\\
    =\Big(\bm{1}\bm{C}_m(\bm{\xi}_{f,k}^r)\Big)_{ni},
    \end{split}\label{eq: proj metrics vol to face}
\end{equation}

\noindent and similarly for the invariant curl formulation. 
Note that $\bm{\Theta}(\bm{\xi}_{v}^r)^{-1}$ is always true and appears from a change of basis in Eq.~(\ref{eq: proj metrics vol to face}). Thus Eq.~(\ref{eq: free-stream1}) becomes,

\begin{equation}
    \begin{split}
   \implies\Big(\bm{M}_m+\bm{K}_m\Big)^{-1}\sum_{f=1}^{N_f}\sum_{k=1}^{N_{fp}} 
   \bm{\chi}(\bm{\xi}_{f,k}^r)^T W_{f,k}[\frac{1}{2}\hat{\bm{n}}^r\bm{C}_m(\bm{\xi}_{f,k}^r)^T\cdot \bm{1} - \frac{1}{2}\hat{\bm{n}}^r\cdot (\bm{1}\bm{C}_m(\bm{\xi}_{f,k}^r))]=\bm{0}^T,
    \end{split}
\end{equation}

\noindent which concludes the proof since free-stream is preserved.\qed

\subsection{Conservation}\label{sec:Conservation}

To prove global and local conservation, we use quadrature rules exact of at least $2p-1$, and consider the $(\bm{M}_m+\bm{K}_m)$-norm,

\begin{equation}
\begin{split}
    \hat{\bm{1}}\Big(\bm{M}_m+\bm{K}_m\Big)\frac{d}{dt}\hat{\bm{u}}_m(t)^T 
  = -\frac{1}{2}\bm{1}\bm{W}\nabla^r\bm{\chi}(\bm{\xi}_v^r)\cdot \hat{\bm{f}}_m^r(t)^T
  -\frac{1}{2}\bm{1}\bm{W}\Tilde{\nabla}^r\bm{\chi}(\bm{\xi}_v^r)\cdot \hat{\bm{f}}_m(t)^T
  - \sum_{f=1}^{N_f}\sum_{k=1}^{N_{fp}} 1 W_{f,k}[\hat{\bm{n}}^r\cdot \bm{f}_m^{C,r}],
\label{splitformtoshowconservation}
\end{split}
\end{equation}

\noindent where $\hat{\bm{1}}$ is implicitly defined by $\bm{1}=[1,\dots,1]=\Big(\bm{\chi}(\bm{\xi}_v^r)\hat{\bm{1}}^T\Big)^T$. Discretely integrating both volume terms by parts yields the following expression for the right-hand-side of Eq.~(\ref{splitformtoshowconservation}), 
\begin{equation}
\begin{split}
=\frac{1}{2}\nabla^r(\bm{1})\bm{W}\cdot \bm{\chi}(\bm{\xi}_v^r)\hat{\bm{f}}_m^r(t)^T
+\frac{1}{2}(\nabla^r\cdot\bm{1}\bm{C}_m(\bm{\xi}_v^r))\bm{W}\cdot \bm{\chi}(\bm{\xi}_v^r)\hat{\bm{f}}_m(t)^T 
-\sum_{f=1}^{N_f}\sum_{k=1}^{N_{fp}} 1 W_{f,k}[\hat{\bm{n}}^r\bm{C}_m(\bm{\xi}_{f,k}^r)^T\cdot \bm{f}_m^{*}].
    \end{split}
\end{equation}

\noindent {\color{black}Utilizing the property of GCL from Eq.~(\ref{eq: GCL}) and that the gradient of a scalar is zero, allows the volume terms to vanish and local conservation is established,}

\begin{equation}
    \begin{split}
  \therefore     \hat{\bm{1}}\Big(\bm{M}_m+\bm{K}_m\Big)\frac{d}{dt}\hat{\bm{u}}(t)^T 
     =-\sum_{f=1}^{N_f}\sum_{k=1}^{N_{fp}} 1 W_{f,k}[\hat{\bm{n}}^r\bm{C}_m(\bm{\xi}_{f,k}^r)^T\cdot \bm{f}_m^{*}].
    \end{split}
\end{equation}

From the assumption of a conforming, water-tight mesh, then the interior normal equals the negative of the exterior normal, provided the surface metrics are computed by Eqs.~(\ref{eq: discrete cons curl abe}) or~(\ref{eq: discrete inv curl abe}), which concludes the proof for global conservation with periodic boundary conditions. \qed

\subsection{Stability}\label{sec:Stability}

We consider the broken Sobolev-norm ${W^{3p,2}_{\delta}}=\bm{M}_m+\bm{K}_m$ to demonstrate stability,

\begin{equation}
    \begin{split}
        \hat{\bm{u}}_m(t)\Big(\bm{M}_m+\bm{K}_m\Big)\frac{d}{d t}\hat{\bm{u}}_m(t)^T
        =\frac{1}{2}\frac{d}{d t}\|\bm{u}\|_{W^{3p,2}_{\delta}}^2=\frac{1}{2}\frac{d}{d t}\|\bm{u}\|_{\bm{M}_m+\bm{K}_m}^2\\
     =   -\frac{1}{2}\bm{u}_m\bm{W}\nabla^r\bm{\chi}(\bm{\xi}_v^r)\cdot \hat{\bm{f}}_m^r(t)^T
        -\frac{1}{2}\bm{u}_m\bm{W}\Tilde{\nabla}^r\bm{\chi}(\bm{\xi}_v^r)\cdot \hat{\bm{f}}_m(t)^T
        -\sum_{f=1}^{N_f}\sum_{k=1}^{N_{fp}} u_m W_{f,k}[\hat{\bm{n}}^r\cdot \bm{f}_m^{C,r}]
    \end{split}
\end{equation}

\noindent {\color{black}Next, we discretely integrate the first volume term by parts in the reference space},

\begin{equation}
    \begin{split}
   = \frac{1}{2}\nabla^r(\bm{u}_m)\bm{W}\cdot\bm{\chi}(\bm{\xi}_v^r) \hat{\bm{f}}_m^r(t)^T
        -\frac{1}{2}\bm{u}_m\bm{W}\Tilde{\nabla}^r\bm{\chi}(\bm{\xi}_v^r)\cdot \hat{\bm{f}}_m(t)^T
        -\sum_{f=1}^{N_f}\sum_{k=1}^{N_{fp}}u_m W_{f,k}[\hat{\bm{n}}^r\bm{C}_m(\bm{\xi}_{f,k}^r)^T\cdot (\bm{f}_m^{*}-\frac{1}{2}\bm{f}_m)].
    \end{split}\label{eq: stab step}
\end{equation}

\noindent Since the two volume terms in Eq.~(\ref{eq: stab step}) are equivalent,
\begin{equation}
    \begin{split}
        \nabla^r(\bm{u}_m)\bm{W}\cdot\bm{\chi}(\bm{\xi}_v^r) \hat{\bm{f}}_m^r(t)^T
        = \nabla^r(\bm{u}_m)\bm{W}\cdot{\bm{f}}_m\bm{C}_m(\bm{\xi}_v^r) 
        =\nabla^r(\bm{u}_m)\bm{W}\bm{C}_m(\bm{\xi}_v^r)^T\cdot{\bm{f}}_m
        =\bm{f}_m\bm{W}\cdot\nabla^r(\bm{u}_m) \bm{C}_m(\bm{\xi}_v^r)^T,\\
       \because \bm{f}_m = \bm{a}\bm{u}_m\implies 
        \nabla^r(\bm{u}_m)\bm{W}\cdot\bm{\chi}(\bm{\xi}_v^r) \hat{\bm{f}}_m^r(t)^T
        =\bm{u}_m\bm{W}\Tilde{\nabla}^r\bm{\chi}(\bm{\xi}_v^r)\cdot \hat{\bm{f}}_m(t)^T,
    \end{split}
\end{equation}

\noindent they discretely cancel for linear advection, and we are left with

\begin{equation}
    \frac{1}{2}\frac{d}{d t}\|\bm{u}\|_{W^{3p,2}_{\delta}}^2
 = -\sum_{f=1}^{N_f}\sum_{k=1}^{N_{fp}}u_m W_{f,k}[\hat{\bm{n}}^r\bm{C}_m(\bm{\xi}_{f,k}^r)^T\cdot (\bm{f}_m^{*}-\frac{1}{2}\bm{f}_m)],\label{eq: stab final cond}
\end{equation}

\noindent which concludes the proof since it is the same stability claim as that for a linear grid. Thus energy is conserved for a central numerical flux, and energy monotonically decreases for an upwind numerical flux with periodic boundary conditions~\cite{Cicchino2020NewNorm,Fernandez2019curvitensor}.\qed

For completeness, we present the operator form of the above stability proof in Appendix~\ref{sec: oper stab proof}.

\section{Results}\label{sec:Results}

In this section, we use the open-source Parallel High-order Library for PDEs (\texttt{PHiLiP}, \newline \url{https://github.com/dougshidong/PHiLiP.git})~\cite{shi2021full}, developed at the Computational Aerodynamics Group at McGill University, to numerically verify all proofs. Three tests are used: the first verifies Thm.~\ref{thm: filter all modes} for three-dimensions, the second verifies that the ESFR filter operator (divergence of the correction functions) must be applied to the volume for stability, and the third verifies Thm.~\ref{thm: difference cons and non cons} and Remark~\ref{remark: Jac dependent sob norm}. When we refer to \enquote{ESFR Classical Split}, we are using the split form with the ESFR correction functions only applied to the surface, whereas \enquote{ESFR Split} refers to our proposed provably stable ESFR split form with the correction functions applied on both the volume and surface terms.

{\color{black}For the order of accuracy (OOA) tests,} to compute the $L_2$ error, an overintegration of $p+10$ was used to provide sufficient strength,
\begin{equation}
    L_2 -error = \sqrt{\sum_{m=1}^{M}{\int_\Omega{(u_m-u)^2d\Omega}}}
    =\sqrt{\sum_{m=1}^{M}(\bm{u}_m^T-\bm{u}_{exact}^T)\bm{W}_m\bm{J}_m(\bm{u}_m-\bm{u}_{exact})}.
\end{equation}

\noindent We additionally compute the $L_\infty$ error as $sup(\bm{u}_m^T - \bm{u}_{exact}^T)$. In all experiments, our basis functions $\bm{\chi}(\bm{\xi}^r)$ are Lagrange polynomials constructed on GLL quadrature nodes. Our \enquote{grid nodes}, or mapping-support-points, are GLL quadrature nodes. Our \enquote{flux nodes} for integration are GL quadrature nodes. Lastly, all schemes were conservative on the order of $1e-15$.

\subsection{ESFR Derivative Test}\label{sec:ESFR deriv test}

The first numerical test addresses Thm.~\ref{thm: filter all modes}, where we solve for the divergence of the flux, ${\nabla}\cdot \bm{f}$. In this test, we only solve for the volume terms. We take the heavily warped grid in Fig.~\ref{fig: heavy warped}, defined by Eq.~(\ref{eq: heavily warped grid fn}), and distribute the flux from Eq.~(\ref{eq: nonlin flux for test}). Then we solve for the volume terms in Eq.~(\ref{eq: ESFR equiv dudt}), that approximate $\nabla\cdot\bm{f}$, for varying values of $c$.

\begin{figure}[H]
    \centering
    \includegraphics[scale=0.5]{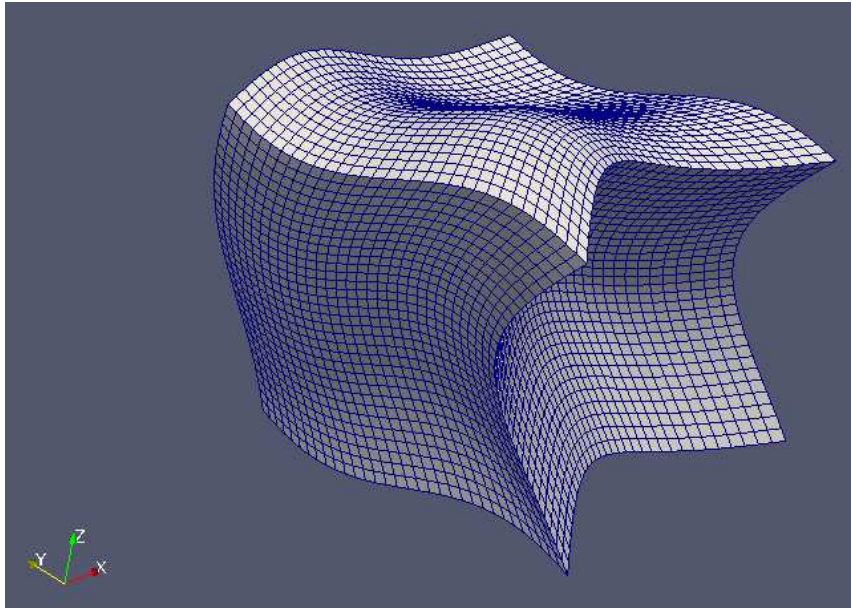}
    \caption{$3D$ Warped Grid}\label{fig: heavy warped}
\end{figure}

\begin{equation}
\begin{split}
 &   x = \xi +\frac{1}{10}\Big(\cos{\pi\eta}+\cos{\pi\zeta}\Big),\\
 &   y = \eta + \frac{1}{10}\exp{(1-\eta)}\Big(\sin{\pi\xi}+\sin{\pi\zeta}\Big),\\
 &   z = \zeta + \frac{1}{20}(\sin{2\pi \xi}+\sin{2\pi \eta}),\\
  &  [\xi,\eta,\zeta]\in[0,1]^3.
\end{split}\label{eq: heavily warped grid fn}
\end{equation}

\begin{equation}
    \begin{split}
  &  \bm{f} = [\exp{(-10x^2)}, \exp{(-10\pi y^3)}, \exp{(-10\sin{z})}],\\
  &     \nabla\cdot \bm{f}_{\text{exact}} = -10\Big(2x\exp{(-10x^2)} +3\pi y^2\exp{(-10\pi y^3)} + \cos{(z)} \exp{(-10\sin{z})}
       \Big).
    \end{split}\label{eq: nonlin flux for test}
\end{equation}

The maximum GCL computed for the grid was {\cal O}(1e-15). First, we demonstrate in Tables~\ref{tab: ooa1} through~\ref{tab: ooa4} that the error levels change as we increase $c$, but the orders remain unchanged until $c\gg c_+$. 
Next, for the polynomial order range, $p=2$ through $p=5$, we verify that applying the ESFR filter operator does not affect the order of accuracy up to a certain value, and by Thm.~\ref{thm: filter all modes}, the scheme loses all orders of accuracy at this value. The black star is the location of $c_+$ in Figures~\ref{fig: drop1} through~\ref{fig: drop2}. The drop off value of $c$ closely resembles the values obtained by Castonguay~\cite[Figure 3.6]{castonguay_phd}.

  \begin{table}[]
\parbox{.45\linewidth}{
\centering
  \begin{tabular}{c c c c c c  c}
dx &  $c_{DG}$ & OOA & $c_{+}$ & OOA\\ \hline 
3.125e-02 &  1.949e-02 &  - &  1.860e-02 & - \\
1.563e-02 &  2.587e-03  & 2.91   &   2.467e-03  &     2.91\\
7.813e-03  & 3.285e-04  &  2.98  &  3.133e-04   &   2.98\\
3.906e-03  &  4.122e-05  &  2.99  &   3.931e-05   &  2.99
\end{tabular} 
 \caption{$L_2$ Convergence Table $p=3$} \label{tab: ooa1}
 }
\hfill
\parbox{.45\linewidth}{
\centering
\centering
  \begin{tabular}{c c c c c c  c}
dx &  $c_{DG}$ & OOA & $c_{+}$ & OOA\\ \hline 
3.125e-02 &  4.436e-01 &  - &  3.713e-01 & - \\
1.563e-02 &   7.033e-02 &   2.66 &   5.746e-02  &   2.69  \\
7.813e-03  & 9.885e-03  &  2.83  &   8.056e-03  &  2.83 \\
3.906e-03  & 1.309e-03   & 2.92   &   1.072e-03   &  2.91
\end{tabular} 
 \caption{$L_\infty$ Convergence Table $p=3$} 
 }
 \end{table}


   \begin{table}[]
\parbox{.45\linewidth}{
\centering
  \begin{tabular}{c c c c c c  c}
dx &  $c_{DG}$ & OOA & $c_{+}$ & OOA\\ \hline 
2.5000e-02 & 1.618e-03 &-   & 1.559e-03  &  - \\
1.2500e-02 &  1.070e-04 & 3.92  &  1.032e-04   &  3.92 \\
  6.2500e-03  &  6.784e-06 &  3.98 &   6.542e-06   &  3.98 \\
 3.1250e-03  &  4.256e-07   & 3.99 &   4.103e-07  &  3.99
\end{tabular} \label{tab: ooa3}
 \caption{$L_2$ Convergence Table $p=4$} 
 }
\hfill
\parbox{.45\linewidth}{
\centering
\centering
  \begin{tabular}{c c c c c c  c}
dx &  $c_{DG}$ & OOA & $c_{+}$ & OOA\\ \hline 
2.5000e-02 & 4.648e-02 & -  & 4.085e-02   & -  \\
1.2500e-02 & 3.388e-03  & 3.78  &   3.033e-03 &  3.75 \\
  6.2500e-03  &  2.296e-04 & 3.88  &   2.056e-04   & 3.88  \\
 3.1250e-03  &  1.496e-05   & 3.94 &  1.341e-05   & 3.94
\end{tabular} 
 \caption{$L_\infty$ Convergence Table $p=4$} \label{tab: ooa4}
 }

 \end{table}

\begin{figure}
\begin{minipage}{.5\textwidth}
    \centering
    \includegraphics[scale=0.5]{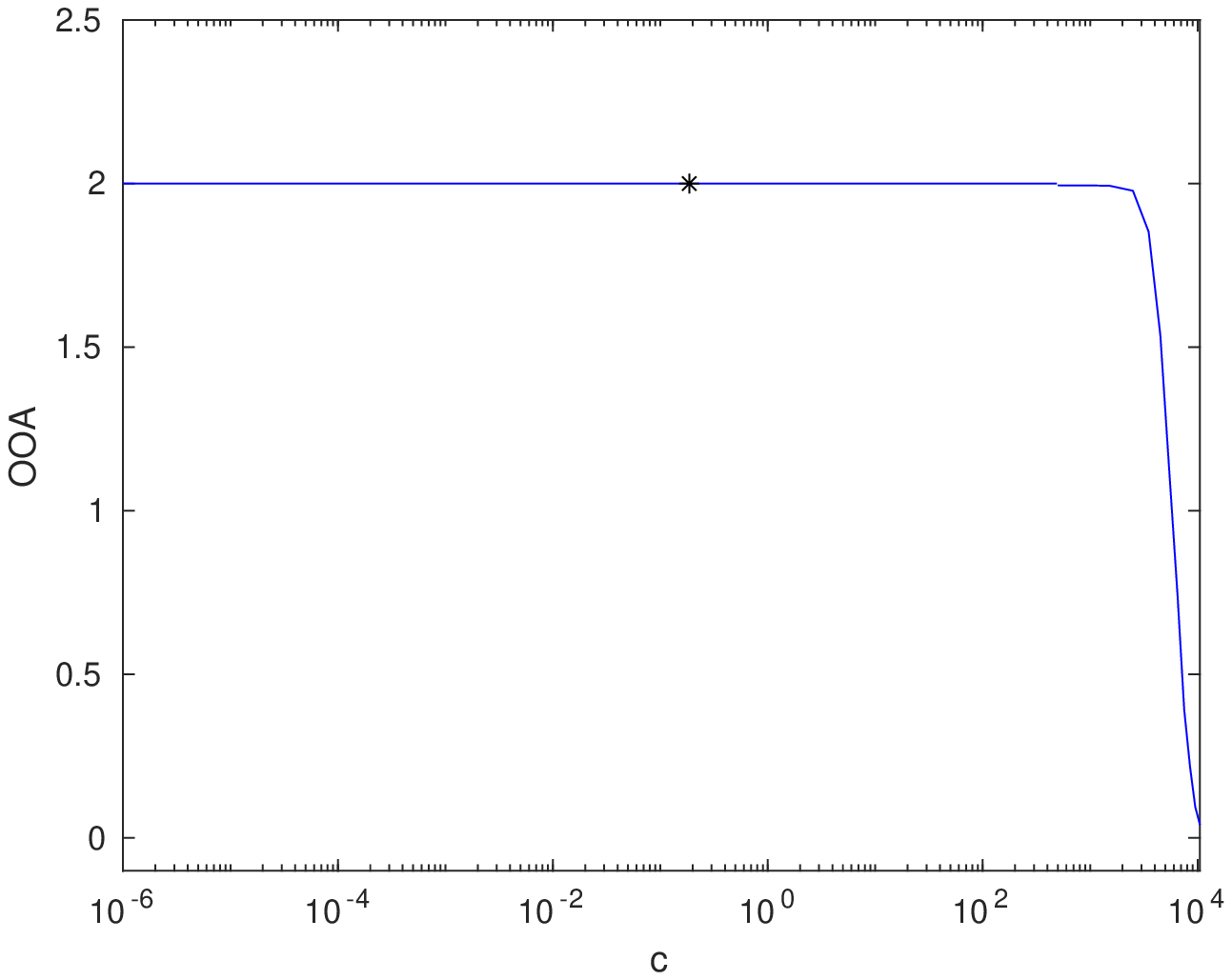}
    \caption{3D $c$ vs OOA for $p=2$}\label{fig: drop1}
\end{minipage}
\begin{minipage}{.5\textwidth}
    \centering
    \includegraphics[scale=0.5]{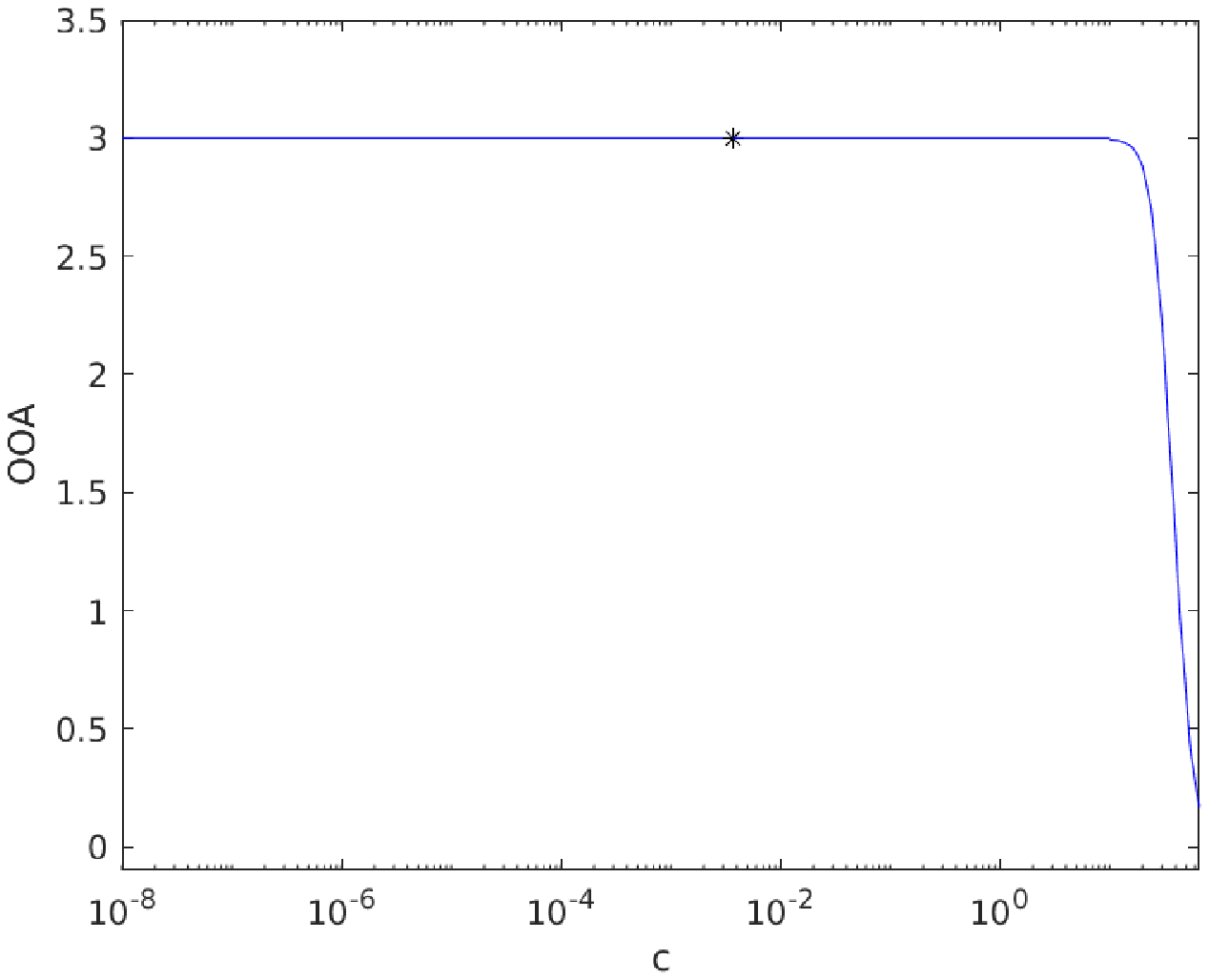}
    \caption{3D $c$ vs OOA for $p=3$}
\end{minipage}
\end{figure}
\begin{figure}
\begin{minipage}{.5\textwidth}
    \centering
    \includegraphics[scale=0.5]{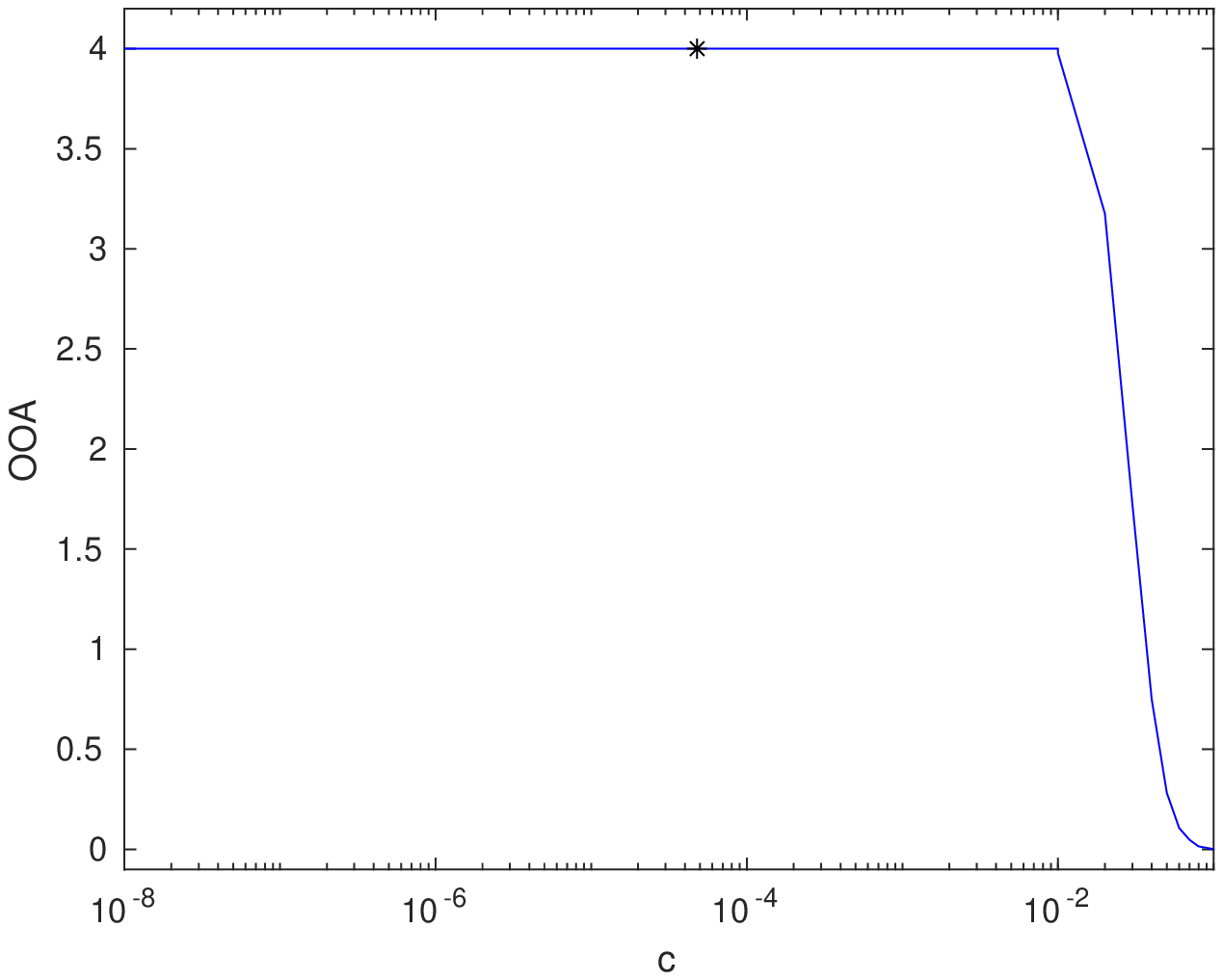}
    \caption{3D $c$ vs OOA for $p=4$}
\end{minipage}
\begin{minipage}{.5\textwidth}
    \centering
    \includegraphics[scale=0.5]{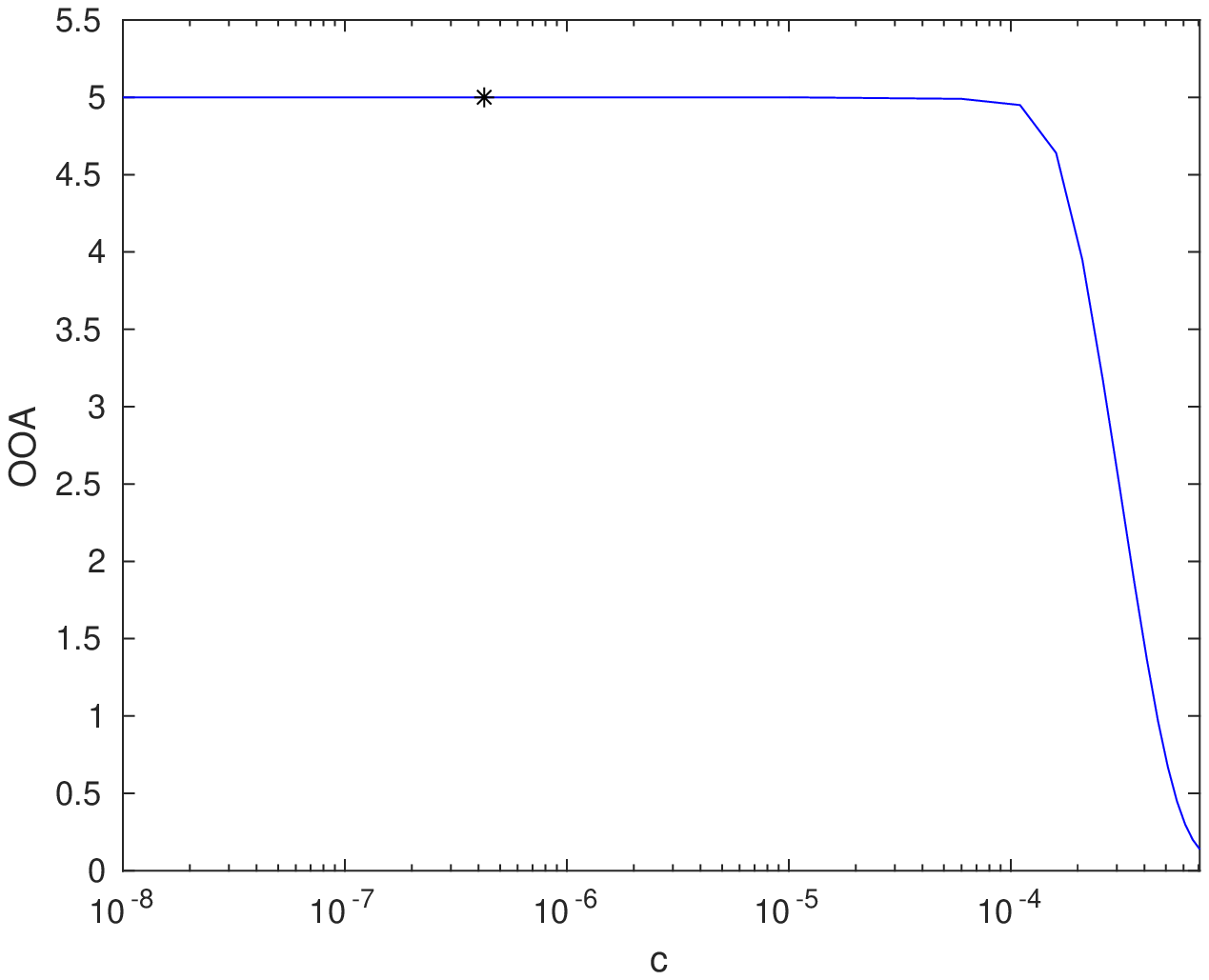}
    \caption{3D $c$ vs OOA for $p=5$}\label{fig: drop2}
\end{minipage}
\end{figure}

\subsection{Nonsymmetric Grid}

As illustrated by Thm~\ref{thm: difference cons and non cons}, the nonlinear metric terms vanish for both symmetric and skew-symmetric grids; resulting in a false-positive stable solution. Thus, a nonsymmetric grid was chosen to ensure that nonlinear metric terms are present. The warping for the nonsymmetric grid is {\color{black}similar to that used by Wu \textit{et al}.~\cite{wu2021high}}, defined by Eq.~(\ref{eq: nonsym grid}), and the grid is illustrated in Fig.~\ref{fig: nonsym grid},


\begin{figure}[H]
    \centering
    \includegraphics[scale=0.5]{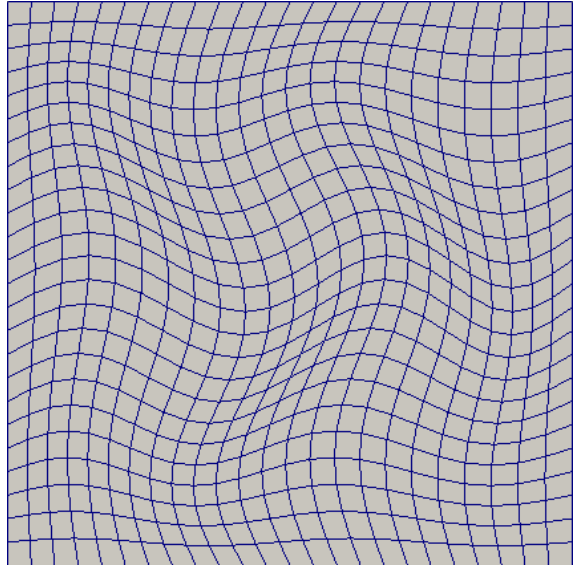}
    \caption{Warped Grid $p=3$}\label{fig: nonsym grid}
\end{figure}

\begin{equation}
\begin{split}
  &  x = \xi +\frac{1}{10}\cos{\frac{\pi}{2}\xi}\cos{\frac{3\pi}{2}\eta}\\
  &  y = \eta + \frac{1}{10}\sin{2\pi\xi}\cos{\frac{\pi}{2}\eta},\\
  &[\xi,\eta]\in[-1,1]^2 .
\end{split}\label{eq: nonsym grid}
\end{equation}

\noindent We apply the following linear advection problem in Eq.~(\ref{eq: lin adv grid 1}),

\begin{equation}
\begin{split}
    \frac{\partial u}{\partial t} + 1.1\frac{\partial u}{\partial x} -\frac{\pi}{e}\frac{\partial u}{\partial y} = 0,\\
    u(x,y,0) = e^{-20(x^2+y^2)},
    \end{split}\label{eq: lin adv grid 1}
\end{equation}

\noindent with periodic boundary conditions. We integrate with a Runge-Kutta-4 integrator, using a timestep $dt=0.05 dx$, with $dx$ being the average distance between two quadrature nodes. The grid is partitioned into $8^2$ elements. 
All results are uncollocated, with the solution built on the GLL nodes and integrated on the GL nodes. Since our metrics were computed via Eq.~(\ref{eq: discrete cons curl abe}) and surface splitting was used, we were able to use GL nodes for both volume and surface integration without any form of optimization seen in the literature~\cite{Fernandez2019curvitensor}. We first show energy results for uncollocated integration in Table~\ref{tab:grid 1 stability}, then uncollocated overintegration in Table~\ref{tab:grid 1 stability overint}.

 \begin{table}[t]
\resizebox{\textwidth}{!}{
 \begin{tabular}{c c c c }
Scheme &  Flux & Energy Conserved $\mathcal{O}$(1e-12) & Energy Monotonically Decrease \\ \hline 
Cons. DG & Central & No & No \\ \hline 
Cons. DG & Upwind & No & No \\ \hline 
EFSR Split $c_{DG}$ & Central & \textbf{Yes} & \textbf{Yes} \\ \hline 
EFSR Split $c_{DG}$ & Upwind & No & \textbf{Yes} \\ \hline 
EFSR Split $c_{+}$ & Central  & \textbf{Yes} &\textbf{Yes} \\ \hline 
EFSR Split $c_{+}$ & Upwind &No & \textbf{Yes} \\ \hline 
EFSR Classical Split $c_{+}$ & Central  & No & No \\ \hline 
EFSR Classical Split $c_{+}$ & Upwind & No & No \footnotemark[1]
\end{tabular} 
 }
 \caption{Energy Results $p=3,4$ Uncollocated $N_{vp}=(p+1)^2$ Grid 1} 
 \label{tab:grid 1 stability}
 \end{table} 
 
 \footnotetext[1]{Although the energy did not monotonically decrease for this case, it did not diverge either. Instead it gradually decreased over time giving a false positive.}
 
  \begin{table}[t]
\resizebox{\textwidth}{!}{
 \begin{tabular}{c c c c }
Scheme &  Flux & Energy Conserved $\mathcal{O}$(1e-12) & Energy Monotonically Decrease \\ \hline 
Cons. DG & Central & No & No \\ \hline 
Cons. DG & Upwind & No & No \\ \hline 
EFSR Split $c_{DG}$ & Central & \textbf{Yes} & \textbf{Yes} \\ \hline 
EFSR Split $c_{DG}$ & Upwind & No & \textbf{Yes} \\ \hline 
EFSR Split $c_{+}$ & Central  & \textbf{Yes} & \textbf{Yes} \\ \hline 
EFSR Split $c_{+}$ & Upwind &No & \textbf{Yes} \\ \hline 
EFSR Classical Split $c_{+}$ & Central  & No & No \\ \hline 
EFSR Classical Split $c_{+}$ & Upwind & No & No \footnotemark[1]
\end{tabular} 
 }
 \caption{Energy Results $p=3,4$ Uncollocated $N_{vp}=(p+3)^2$ Grid 1} 
 \label{tab:grid 1 stability overint}
 \end{table} 

An interesting result in Tables~\ref{tab:grid 1 stability} and~\ref{tab:grid 1 stability overint} is the false positive for the ESFR Classical split with an upwind numerical flux. {\color{black} From the derivation of our proposed curvilinear FR schemes in Sec.~\ref{sec: ESFR classical}, specifically Eq.~(\ref{eq: ESFR 3p deriv}), there is no stability claim for the two terms $\bm{K}_m\Big( \bm{J}_m^{-1} \nabla^r \bm{\chi}(\bm{\xi}_v^r)\Big)$ and $\bm{K}_m\Big( \bm{J}_m^{-1}\Tilde{\nabla}^r\bm{\chi}(\bm{\xi}_v^r)\Big)$ as they can result in either a convergent or divergent scheme. The advantage of our proposed FR schemes is that they are provably stable. In the next test case, it will be shown that the ESFR classical split is divergent for a skew-symmetric grid.}


To demonstrate the orders of accuracy, we consider the linear advection problem in Eq.~(\ref{eq: IC grid 1}),

\begin{equation}
\begin{split}
    \frac{\partial u}{\partial t} + \frac{\partial u}{\partial x} +\frac{\partial u}{\partial y} = 0,\\
    [x,y]\in[-1,1]^2,\text{ } t\in[0,2]\\
        u(x,y,0) = \sin{\pi x}\sin{\pi y},\\
        u_{exact}(x,y,t) = \sin{\pi (x-t)}\sin{\pi (y-t)}.
    \end{split}\label{eq: IC grid 1}
\end{equation}

We used a timestep of $dt=0.5 dx$, where again $dx$ is the average distance between two quadrature nodes. The convergence rates are shown in Tables~\ref{tab: ooa1 grid 1} through~\ref{tab: ooa4 grid 1} for uncollocated integration, and Tables~\ref{tab: ooa5 grid 1} through~\ref{tab: ooa8 grid 1} for uncollocated overintegration.


 \begin{table}[]
\parbox{.45\linewidth}{
\centering
 \begin{tabular}{c c c c c c  c}
dx &  $c_{DG}$ & OOA & $c_{+}$ & OOA\\ \hline 
6.2500e-02 & 1.4592e-02 &  - & 3.9628e-02& - \\
3.1250e-02 & 1.1632e-03 &  3.65 &3.0945e-03& 3.68\\
1.5625e-02 &   7.4833e-05& 3.96&   1.7779e-04    & 4.12\\
7.8125e-03  &  4.7374e-06   & 3.98 &  1.0851e-05  & 4.03 \\
3.9062e-03   &   3.0227e-07  & 3.97   & 6.8311e-07 & 3.99 
\end{tabular} 
 \caption{$L_2$ Convergence Table $p=3$ $N_{vp}=(p+1)^2$ Upwind Numerical Flux Grid 1} \label{tab: ooa1 grid 1}
 
 }
\hfill
\parbox{.45\linewidth}{
\centering
  \begin{tabular}{c c c c c c  c}
dx &  $c_{DG}$ & OOA & $c_{+}$ & OOA\\ \hline 
6.2500e-02 & 4.7490e-02 &  - & 1.1192e-01& - \\
3.1250e-02 & 5.2854e-03 &3.17 &1.4510e-02 & 2.95\\
1.5625e-02 & 3.6961e-04  & 3.84 &     1.5445e-03  & 3.23\\
7.8125e-03  &  2.5181e-05   & 3.88 &   8.0837e-05 & 4.26 \\
3.9062e-03   &  1.6401e-06  &  3.94    &5.2672e-06  & 3.94
\end{tabular} 
 \caption{$L_\infty$ Convergence Table $p=3$ $N_{vp}=(p+1)^2$ Upwind Numerical Flux Grid 1} 
}
 
 \end{table}

 
  \begin{table}[]
\parbox{.45\linewidth}{
\centering
 \begin{tabular}{c c c c c c  c}
dx &  $c_{DG}$ & OOA & $c_{+}$ & OOA\\ \hline 
5.0000e-02&3.7766e-03 &- &  8.1107e-03  & - \\
 2.5000e-02  & 1.4876e-04 & 4.67 & 2.5675e-04   & 4.98\\
 1.2500e-02 & 5.1042e-06 & 4.87   &  9.2869e-06 &   4.79      \\
 6.2500e-03 & 1.6763e-07  & 4.93  &    3.1350e-07& 4.89    \\
 3.1250e-03&  5.4776e-09 & 4.94 &   1.0345e-08 & 4.92
\end{tabular} 
 \caption{$L_2$ Convergence Table $p=4$ $N_{vp}=(p+1)^2$ Upwind Numerical Flux Grid 1} 
 }
\hfill
\parbox{.45\linewidth}{
\centering
\centering
  \begin{tabular}{c c c c c c  c}
dx &  $c_{DG}$ & OOA & $c_{+}$ & OOA\\ \hline 
5.0000e-02&1.7943e-02 &- &  3.4934e-02  & - \\
 2.5000e-02  &  7.1420e-04&4.65   &  1.8378e-03  & 4.25\\
 1.2500e-02 & 2.7883e-05  & 4.67   &   6.7414e-05 & 4.77    \\
 6.2500e-03 & 1.0551e-06   &  4.72  &    2.9290e-06& 4.52    \\
 3.1250e-03&  3.4989e-08 & 4.91 &  1.0435e-07 &   4.81  
\end{tabular} 
 \caption{$L_\infty$ Convergence Table $p=4$ $N_{vp}=(p+1)^2$ Upwind Numerical Flux Grid 1} \label{tab: ooa4 grid 1}
 }

 \end{table}





 \begin{table}[]
\parbox{.45\linewidth}{
\centering
 \begin{tabular}{c c c c c c  c}
dx &  $c_{DG}$ & OOA & $c_{+}$ & OOA\\ \hline 
6.2500e-02 & 1.4539e-02 &  - &3.9565e-02 & - \\
3.1250e-02 &1.1594e-03  & 3.65& 3.0883e-03&3.68 \\
1.5625e-02 & 7.4762e-05  & 3.95&   1.7771e-04     & 4.12\\
7.8125e-03  &  4.7363e-06 &3.98     & 1.0849e-05   & 4.03 \\
3.9062e-03   &  3.0074e-07 & 3.98     & 6.8243e-07 & 3.99  
\end{tabular} 
 \caption{$L_2$ Convergence Table $p=3$ $N_{vp}=(p+3)^2$ Upwind Numerical Flux Grid 1} \label{tab: ooa5 grid 1}
 
 }
\hfill
\parbox{.45\linewidth}{
\centering
  \begin{tabular}{c c c c c c  c}
dx &  $c_{DG}$ & OOA & $c_{+}$ & OOA\\ \hline 
6.2500e-02 & 4.7467e-02 &  - &1.1172e-01 & - \\
3.1250e-02 &5.2245e-03 & 3.18&1.4400e-02 & 2.96\\
1.5625e-02 & 3.6902e-04  & 3.82 &    1.5435e-03  & 3.22\\
7.8125e-03  &  2.5167e-05  &3.87    &  8.0826e-05 & 4.26 \\
3.9062e-03   &1.6398e-06 &  3.94    &  5.2670e-06 & 3.94 
\end{tabular} 
 \caption{$L_\infty$ Convergence Table $p=3$ $N_{vp}=(p+3)^2$ Upwind Numerical Flux Grid 1} 
}
 
 \end{table}

 
  \begin{table}[]
\parbox{.45\linewidth}{
\centering
 \begin{tabular}{c c c c c c  c}
dx &  $c_{DG}$ & OOA & $c_{+}$ & OOA\\ \hline 
5.0000e-02&3.7361e-03 &- &   8.0479e-03 & - \\
 2.5000e-02  & 1.4812e-04 & 4.66 &    2.5660e-04& 4.97\\
 1.2500e-02 &5.0980e-06 &  4.86  &   9.2793e-06 & 4.79     \\
 6.2500e-03 &  1.6758e-07 &  4.93  & 3.1344e-07 &4.89       \\
 3.1250e-03& 5.4642e-09  & 4.94   & 1.0338e-08  &  4.92  
\end{tabular} 
 \caption{$L_2$ Convergence Table $p=4$ $N_{vp}=(p+3)^2$ Upwind Numerical Flux Grid 1} 
 }
\hfill
\parbox{.45\linewidth}{
\centering
\centering
  \begin{tabular}{c c c c c c  c}
dx &  $c_{DG}$ & OOA & $c_{+}$ & OOA\\ \hline 
5.0000e-02& 1.7408e-02&- & 3.4462e-02   & - \\
 2.5000e-02  & 7.0797e-04 & 4.62  & 1.8335e-03   & 4.23\\
 1.2500e-02 & 2.7798e-05 &4.67   &  6.7391e-05   & 4.77    \\
 6.2500e-03 &  1.0549e-06 & 4.72   &   2.9288e-06 &  4.52     \\
 3.1250e-03& 3.4983e-08 &   4.91   &  1.0434e-07  & 4.81  
\end{tabular} 
 \caption{$L_\infty$ Convergence Table $p=4$ $N_{vp}=(p+3)^2$ Upwind Numerical Flux Grid 1} \label{tab: ooa8 grid 1}
 }

 \end{table}

\subsection{Skew-Symmetric Grid}
For further verification, we conduct the same experiments on the skew-symmetric grid from Hennemann \textit{et al}.~\cite{hennemann2021provably}, shown in Fig.\ref{fig: skewsym grid}, with warping defined in Eq.~(\ref{eq: skewsym grid warp}),

\begin{figure}[H]
    \centering
    \includegraphics[scale=0.5]{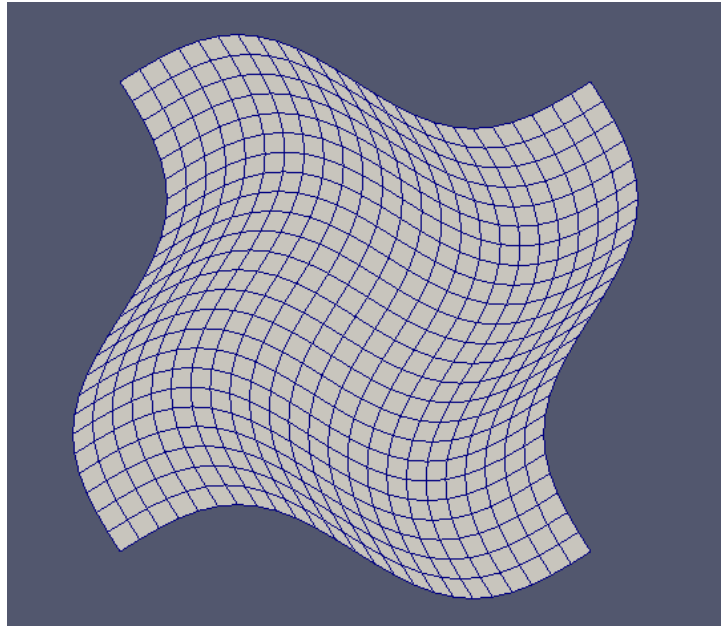}
    \caption{Second Warped Grid $p=3$}\label{fig: skewsym grid}
\end{figure}

\begin{equation}
\begin{split}
 &   x = \xi -0.1\sin{2\pi\eta},\\
 &   y = \eta +0.1\sin{2\pi\xi},\\
 &   [\xi,\eta]\in[0,1]^2.
\end{split}\label{eq: skewsym grid warp}
\end{equation}

\noindent The sole purpose of using a skew-symmetric grid, is to show that even in the case when the grid has all the nonlinear metric terms cancel out, as per Thm.~\ref{thm: difference cons and non cons}, Remark~\ref{remark: Jac dependent sob norm} holds because the determinant of the metric Jacobian cannot be factored out of the $\partial^{(s,v,w)}$ derivative. Thus, the ESFR correction functions must satisfy the metric dependent stability criteria in Eq.~(\ref{eq: CURV ESFR fund assumpt}). We use the same initial condition described in Eq.~(\ref{eq: lin adv grid 1}) for the energy results, 
presented in Tables~\ref{tab: energy grid2} and~\ref{tab: energy2 grid2}.

 \begin{table}[t]
\resizebox{\textwidth}{!}{
 \begin{tabular}{c c c c }
Scheme &  Flux & Energy Conserved $\mathcal{O}$(1e-12) & Energy Monotonically Decrease \\ \hline 
Cons. DG & Central & \textbf{Yes} & \textbf{Yes} \\ \hline 
Cons. DG & Upwind & No & \textbf{Yes}\\ \hline 
EFSR Split $c_{DG}$ & Central & \textbf{Yes} & \textbf{Yes} \\ \hline 
EFSR Split $c_{DG}$ & Upwind & No & \textbf{Yes} \\ \hline 
EFSR Split $c_{+}$ & Central  & \textbf{Yes} & \textbf{Yes} \\ \hline 
EFSR Split $c_{+}$ & Upwind &No & \textbf{Yes} \\ \hline 
EFSR Classical Split $c_{+}$ & Central  & No & No \\ \hline 
EFSR Classical Split $c_{+}$ & Upwind & No & No
\end{tabular} 
 }\label{tab: grid 1 stability}
 \caption{Energy Results $p=3,4$ Uncollocated $N_{vp}=(p+1)^2$ Grid 2} \label{tab: energy grid2}
 \end{table} 
  \begin{table}[t]
\resizebox{\textwidth}{!}{
 \begin{tabular}{c c c c }
Scheme &  Flux & Energy Conserved $\mathcal{O}$(1e-12) & Energy Monotonically Decrease \\ \hline 
Cons. DG & Central &\textbf{Yes} & \textbf{Yes}\\ \hline 
Cons. DG & Upwind & No & \textbf{Yes} \\ \hline 
EFSR Split $c_{DG}$ & Central & \textbf{Yes} & \textbf{Yes} \\ \hline 
EFSR Split $c_{DG}$ & Upwind & No &\textbf{Yes} \\ \hline 
EFSR Split $c_{+}$ & Central  &\textbf{Yes} & \textbf{Yes} \\ \hline 
EFSR Split $c_{+}$ & Upwind &No & \textbf{Yes} \\ \hline 
EFSR Classical Split $c_{+}$ & Central  & No & No \\ \hline 
EFSR Classical Split $c_{+}$ & Upwind & No & No
\end{tabular} 
 }\label{tab: grid 1 stability}
 \caption{Energy Results $p=3,4$ Uncollocated $N_{vp}=(p+3)^2$ Grid 2} \label{tab: energy2 grid2}
 \end{table}

 An interesting result from this grid is that conservative DG without the split form was stable, due to the skew-symmetry of the grid; while, ESFR classical split form was unstable. This highlights the importance of false positives while testing curvilinear grids. 
To demonstrate the orders of accuracy, we consider the linear advection problem from Eq.~(\ref{eq: IC grid 1}), and present the results in Tables~\ref{tab: ooa1 grid2} to~\ref{tab: ooa8 grid2}.

 \begin{table}[]
\parbox{.45\linewidth}{
\centering
 \begin{tabular}{c c c c c c  c}
dx &  $c_{DG}$ & OOA & $c_{+}$ & OOA\\ \hline 
6.2500e-02 & 6.9001e-03 &  - &2.2205e-02 & - \\
3.1250e-02 & 4.9929e-04 &3.79  & 2.1666e-03& 3.36\\
1.5625e-02 &  3.0374e-05 &4.04  &   9.5947e-05& 4.50     \\
7.8125e-03  &  1.9340e-06  &3.97    &  5.6723e-06 & 4.08   \\
3.9062e-03   &  1.2339e-07 &3.97   &  3.5426e-07& 4.00 
\end{tabular} 
 \caption{$L_2$ Convergence Table $p=3$ $N_{vp}=(p+1)^2$ Upwind Numerical Flux Grid 2} \label{tab: ooa1 grid2}
 
 }
\hfill
\parbox{.45\linewidth}{
\centering
  \begin{tabular}{c c c c c c  c}
dx &  $c_{DG}$ & OOA & $c_{+}$ & OOA\\ \hline 
6.2500e-02 & 4.0068e-02 &  - &1.6836e-01 & - \\
3.1250e-02 &3.7121e-03  &3.43  & 2.7181e-02& 2.63  \\
1.5625e-02 &  3.0497e-04 &3.61  &    1.2679e-03 & 4.42    \\
7.8125e-03  &   2.1527e-05   & 3.82  &  6.7106e-05 &  4.24   \\
3.9062e-03   &  1.4001e-06 &   3.94  &  4.0344e-06  &   4.06  
\end{tabular} 
 \caption{$L_\infty$ Convergence Table $p=3$ $N_{vp}=(p+1)^2$ Upwind Numerical Flux Grid 2} 
}
 
 \end{table}

 
  \begin{table}[]
\parbox{.45\linewidth}{
\centering
 \begin{tabular}{c c c c c c  c}
dx &  $c_{DG}$ & OOA & $c_{+}$ & OOA\\ \hline 
5.0000e-02&1.5174e-03  &- &   3.7476e-03 & - \\
 2.5000e-02  & 4.8840e-05 & 4.96&  1.0266e-04 &5.19   \\
 1.2500e-02 &  1.6575e-06& 4.88   &   3.7381e-06  &4.78    \\
 6.2500e-03 &  5.9007e-08& 4.81   &  1.4512e-07 &4.69      \\
 3.1250e-03&   2.1770e-09  &  4.76    & 5.2669e-09&   4.78 
\end{tabular} 
 \caption{$L_2$ Convergence Table $p=4$ $N_{vp}=(p+1)^2$ Upwind Numerical Flux Grid 2} 
 }
\hfill
\parbox{.45\linewidth}{
\centering
\centering
  \begin{tabular}{c c c c c c  c}
dx &  $c_{DG}$ & OOA & $c_{+}$ & OOA\\ \hline 
5.0000e-02& 9.6178e-03&- &  2.4719e-02  & - \\
 2.5000e-02  & 3.9077e-04 &4.62    &  1.2826e-03 & 4.27   \\
 1.2500e-02 & 1.5061e-05 &4.70  &   5.5561e-05  &  4.53   \\
 6.2500e-03 & 6.9115e-07 &  4.45    &  2.6736e-06  &    4.38   \\
 3.1250e-03& 2.4981e-08 &  4.79    & 1.0379e-07  & 4.69 
\end{tabular} 
 \caption{$L_\infty$ Convergence Table $p=4$ $N_{vp}=(p+1)^2$ Upwind Numerical Flux Grid 2} 
 }

 \end{table}





 \begin{table}[]
\parbox{.45\linewidth}{
\centering
 \begin{tabular}{c c c c c c  c}
dx &  $c_{DG}$ & OOA & $c_{+}$ & OOA\\ \hline 
6.2500e-02 & 6.8280e-03 &  - &2.2131e-02 & - \\
3.1250e-02 & 4.9794e-04 &3.78  & 2.1647e-03& 3.35 \\
1.5625e-02 & 3.0357e-05  &4.04  &    9.5922e-05& 4.50    \\
7.8125e-03  &  1.9337e-06  & 3.97   &  5.6719e-06  &  4.08  \\
3.9062e-03   &  1.2338e-07  &3.97    & 3.5425e-07 &  4.00 
\end{tabular} 
 \caption{$L_2$ Convergence Table $p=3$ $N_{vp}=(p+3)^2$ Upwind Numerical Flux Grid 2} 
 
 }
\hfill
\parbox{.45\linewidth}{
\centering
  \begin{tabular}{c c c c c c  c}
dx &  $c_{DG}$ & OOA & $c_{+}$ & OOA\\ \hline 
6.2500e-02 & 3.9689e-02 &  - &1.6899e-01 & - \\
3.1250e-02 & 3.6977e-03 &  3.42&2.7110e-02  &2.64  \\
1.5625e-02 &  3.0464e-04 &3.60  &   1.2676e-03  &4.42    \\
7.8125e-03  &  2.1521e-05  & 3.82    &  6.7102e-05 & 4.24    \\
3.9062e-03   &  1.4000e-06  &  3.94    & 4.0343e-06  &  4.06  
\end{tabular} 
 \caption{$L_\infty$ Convergence Table $p=3$ $N_{vp}=(p+3)^2$ Upwind Numerical Flux Grid 2} 
}
 
 \end{table}

 
  \begin{table}[]
\parbox{.45\linewidth}{
\centering
 \begin{tabular}{c c c c c c  c}
dx &  $c_{DG}$ & OOA & $c_{+}$ & OOA\\ \hline 
5.0000e-02& 1.5058e-03&- &  3.7268e-03  & - \\
 2.5000e-02  & 4.8725e-05 &4.95   &  1.0251e-04 & 5.18  \\
 1.2500e-02 &1.6566e-06  & 4.88  &    3.7369e-06 &4.78    \\
 6.2500e-03 & 5.8997e-08  & 4.81   &  1.4511e-07  &  4.69    \\
 3.1250e-03& 2.1769e-09 & 4.76    &   5.2668e-09 &  4.78
\end{tabular} 
 \caption{$L_2$ Convergence Table $p=4$ $N_{vp}=(p+3)^2$ Upwind Numerical Flux Grid 2} 
 }
\hfill
\parbox{.45\linewidth}{
\centering
\centering
  \begin{tabular}{c c c c c c  c}
dx &  $c_{DG}$ & OOA & $c_{+}$ & OOA\\ \hline 
5.0000e-02& 9.5289e-03&- &  2.4636e-02  & - \\
 2.5000e-02  & 3.8874e-04 & 4.62  &  1.2817e-03 & 4.26  \\
 1.2500e-02 & 1.5046e-05&  4.69  &   5.5551e-05  & 4.53   \\
 6.2500e-03 & 6.9098e-07 &   4.44   &   2.6735e-06   &   4.38 \\
 3.1250e-03&  2.4979e-08 &   4.79  &   1.0379e-07  &   4.69  
\end{tabular} 
 \caption{$L_\infty$ Convergence Table $p=4$ $N_{vp}=(p+3)^2$ Upwind Numerical Flux Grid 2}  \label{tab: ooa8 grid2}
 }

 \end{table}

\section{Conclusion}\label{sec:Conclusion}

This article proved that discrete integration by parts is not satisfied in the physical space for DG conservative and non-conservative forms, as well as standard FR forms, even with analytically exact metric terms and exact integration
---provided that the basis functions are polynomial in the reference space. This lead to the formulation of metric dependent FR correction functions. Through the construction of metric dependent FR correction functions, the inclusion of metric Jacobian dependence within arbitrarily dense norms was derived and manifested through the FR broken Sobolev-norm. The resultant curvilinear expression had the correction functions filtering all modes of the discretization. 
The theoretical findings were numerically verified with a three-dimensional, heavily warped, non-symmetric grid, where the orders of convergence were lost at the equivalent correction parameter value $c$ as that of the one-dimensional ESFR scheme.

We derived dense, modal or nodal, FR schemes in curvilinear coordinates that ensured provable stability and conservation. This was achieved by incorporating the FR correction functions (FR filter operator) on both the volume and surface terms. Through a suite of curvilinear test-cases, one being non-symmetric and the other being skew-symmetric, the provable stability claim was numerically verified for our proposed FR schemes. The choice of grids highlighted the importance of assessing false-positives, especially in curvilinear coordinates where metric skew-symmetry has the metric cross-terms cancel out, as well as when metric symmetry combined with equivalent advection speeds in every physical direction results in an equivalence between the conservative and non-conservative forms. 
It was also numerically verified that FR schemes that solely use the correction functions to reconstruct the surface {\color{black}are} divergent in general curvilinear coordinates---in both conservative and in split form. Lastly, we demonstrate that the proposed FR scheme retains optimal orders of accuracy in the appropriate range of $c$ values.

\section{Acknowledgements}

We would like to gratefully acknowledge the financial support of the Natural Sciences and Engineering Research Council of Canada Discovery Grant Program and McGill University. Jesse Chan acknowledges support from the US National Science Foundation under awards DMS-1719818 and DMS-1943186.

\appendix
\section{Summation-by-Parts}

The proposed algorithm in this paper is inspired by developments in the SBP literature, but derived using standard techniques and arguments from both the DG and FR communities. In this section, we make the link to the SBP formalism direct by assembling the relevant SBP operators. 

The stiffness operators satisfy discrete integration by parts for quadrature rules of at least $2p-1$ strength,

\begin{equation}
\begin{split}
    \int_{\bm{\Omega}_r} \chi_{i}(\bm{\xi}^r) \nabla^r\chi_{j}(\bm{\xi}^r) d\bm{\Omega}_r 
    +\int_{\bm{\Omega}_r} \nabla^r\chi_{i}(\bm{\xi}^r) \chi_{j}(\bm{\xi}^r) d\bm{\Omega}_r 
    =\int_{\bm{\Gamma}_r} \chi_{i}(\bm{\xi}^r) \chi_{j}(\bm{\xi}^r)\hat{\bm{n}}^r d\bm{\Gamma}_r \\
    \Leftrightarrow
    \bm{\chi}(\bm{\xi}_v^r)^T\bm{W}\nabla^r\bm{\chi}(\bm{\xi}_v^r)
    +\nabla^r\bm{\chi}(\bm{\xi}_v^r)^T\bm{W}\bm{\chi}(\bm{\xi}_v^r)
    =\sum_{f=1}^{N_f}\sum_{k=1}^{N_{fp}} \bm{\chi}(\bm{\xi}_{f,k}^r)^T W_{f,k}\hat{\bm{n}}^r\bm{\chi}(\bm{\xi}_{f,k}^r).
\end{split}
\end{equation}

\subsection{SBP - Strong Form FR Split}\label{sec:SBP deriv}

We introduce the lifting operator,

\begin{equation}
    \bm{L}_q= \bm{M}^{-1}\sum_{f=1}^{N_f}\bm{\chi}(\bm{\xi}_{f}^r)^T \bm{W}_f,
\end{equation}

\noindent where $\bm{\chi}(\bm{\xi}_{f}^r)$ stores the basis functions evaluated at all facet cubature nodes on the face $f$, and $\bm{W}_f$ is a diagonal matrix storing the quadrature weights on the face $f$.

We now introduce the SBP operator~\cite{chan2019skew},

\begin{equation}
    \bm{Q}^i = \bm{W}(\bm{M}^{-1}\bm{S}_\xi)\bm{\Pi},\nonumber
\end{equation}

\noindent to formulate the \textit{skew-hybridized} SBP operator from Chan~\cite[Eq. (10)]{chan2019skew},

\begin{equation}
    \Tilde{\bm{Q}}_p^i = \frac{1}{2}\begin{bmatrix}
    \bm{Q}^i-(\bm{Q}^i)^T & \bm{W}\bm{\chi}(\bm{\xi}_v^r)\bm{L}_q\diag(\hat{\bm{n}}_f^\xi)\\
    -\sum_{f=1}^{N_f}\bm{W}_{f}\diag(\hat{\bm{n}}_f^\xi)\bm{\chi}(\bm{\xi}_{f}^r)\bm{\Pi}
    & \sum_{f=1}^{N_f}\bm{W}_{f}\diag(\hat{\bm{n}}_f^\xi)
    \end{bmatrix}.
\end{equation}

Next, similar to Chan~\cite[Eq. (27)]{chan2019skew}, we introduce the metric dependent hybridized SBP operator as,

\begin{equation}
    \bm{Q}_m^i = \frac{1}{2}\sum_{j=1}^{d}\Big(
    \diag{\begin{bmatrix}\bm{C}_m(\bm{\xi}_v^r)_{ji}\\\bm{C}_m(\bm{\xi}_{f}^r)_{ji}
    \end{bmatrix}} \Tilde{\bm{Q}}_p^j 
    +\Tilde{\bm{Q}}_p^j\diag{\begin{bmatrix}\bm{C}_m(\bm{\xi}_v^r)_{ji}\\\bm{C}_m(\bm{\xi}_{f}^r)_{ji}
    \end{bmatrix}}
    \Big).
\end{equation}

In equivalent form, we express Eq.~(\ref{eq: ESFR equiv dudt}) as,

\begin{equation}
\begin{split}
    \frac{d}{dt}\hat{\bm{u}}_m(t)^T + \Big[(\bm{M}_m+\bm{K}_m)^{-1}\bm{\chi}(\bm{\xi}_v^r)^T\:\: (\bm{M}_m+\bm{K}_m)^{-1}\sum_{f=1}^{N_f} \bm{\chi}(\bm{\xi}_{f}^r)^T\Big] \sum_{j=1}^{d} \Big( 2\bm{Q}_m^i \circ \bm{F}^j_{S} \Big)\bm{1}^T\\ +\sum_{j=1}^{d}(\bm{M}_m+\bm{K}_m)^{-1}\sum_{f=1}^{N_f} \diag{(\bm{n}_{m,j})}  (\bm{f}_j^* - \bm{f}_j(\tilde{\bm{u}_m})) = \bm{0}^T\\
    (\bm{F}^i_{S})_{jk} = \bm{f}_S^i(\Tilde{\bm{u}}_j,\Tilde{\bm{u}}_k),\: \forall 1\leq j+k\leq N_{vp}+ N_{fp},
 \end{split}
\end{equation}

\noindent where $\bm{n}_m = \hat{\bm{n}}^r\bm{C}_m^T$.

We would like to emphasize that incorporating the ESFR filter on the volume terms does not create a new ESFR differential operator, but instead is a modification on the norm that the DG volume is projected on. That is, we project both the volume and the surface terms to the $p$-th order broken Sobolev-space in the nonlinearly stable FR scheme, whereas in DG, the volume and surfaces are projected onto the {\color{black}$L_2$}-space.

\section{Stability Proof - Operator Form}\label{sec: oper stab proof}

Here we present the stability proof from Sec.~\ref{sec:Stability} in operator form. We start by applying the $(\bm{M}_m+\bm{K}_m)$-norm, and we quickly see that it cancels off with its respective inverse,

\begin{equation}
    \begin{split}
      &  \hat{\bm{u}}_m(t)\Big(\bm{M}_m+\bm{K}_m\Big)\frac{d}{d t}\hat{\bm{u}}_m(t)^T
        \\ 
        &=-\hat{\bm{u}}_m(t)\Big( \bm{M}_m+\bm{K}_m\Big)\Big( \bm{M}_m+\bm{K}_m\Big)^{-1}
        \sum_{i=1}^{d}\sum_{j=1}^{d}   \bm{\chi}(\bm{\xi}_v^r)^T\bm{W} 
        \Big( a_i \frac{ \partial \bm{\chi}(\bm{\xi}_v^r)}{\partial \xi_j} \bm{\Pi}  (J_m^\Omega\frac{\partial \xi_j}{\partial x_i})\bm{\chi}(\bm{\xi}_v^r)\hat{\bm{u}}_m(t)^T 
        + a_i (J_m^\Omega\frac{\partial \xi_j}{\partial x_i}) \frac{ \partial \bm{\chi}(\bm{\xi}_v^r)}{\partial \xi_j} \bm{\Pi}\bm{\chi}(\bm{\xi}_v^r)\hat{\bm{u}}_m(t)^T
        \Big)\\
    &  -\hat{\bm{u}}_m(t)\Big( \bm{M}_m+\bm{K}_m\Big)\Big( \bm{M}_m+\bm{K}_m\Big)^{-1}
        \sum_{f=1}^{N_f} \bm{\chi}(\bm{\xi}_{f}^r)^T \bm{W}_f \diag(\hat{\bm{n}}_f^r) \bm{f}_m^{{C,r}^T}.
    \end{split}
\end{equation}

\noindent Next, consider the volume terms with respect to a single $(i,j)$-pairing, substitute $\frac{\partial \bm{\chi}(\bm{\xi}_v^r)}{\partial \xi_j} = \bm{\chi}(\bm{\xi}_v^r)\bm{M}^{-1}\bm{S}_{\xi,j}$, and swap the metric terms with the quadrature weights in the second volume term,

\begin{equation}
\begin{split}
    \hat{\bm{u}}_m(t) \bm{\chi}(\bm{\xi}_v^r)^T\bm{W} 
        \Big( a_i \frac{ \partial \bm{\chi}(\bm{\xi}_v^r)}{\partial \xi_j} \bm{\Pi}  (J_m^\Omega\frac{\partial \xi_j}{\partial x_i})\bm{\chi}(\bm{\xi}_v^r)\hat{\bm{u}}_m(t)^T 
        + a_i (J_m^\Omega\frac{\partial \xi_j}{\partial x_i}) \frac{ \partial \bm{\chi}(\bm{\xi}_v^r)}{\partial \xi_j} \bm{\Pi}\bm{\chi}(\bm{\xi}_v^r)\hat{\bm{u}}_m(t)^T
        \Big)\\
        =a_i\hat{\bm{u}}_m(t) \bm{\chi}(\bm{\xi}_v^r)^T\bm{W} 
          \bm{\chi}(\bm{\xi}_v^r)\bm{M}^{-1}\bm{S}_{\xi,j} \bm{\Pi}  (J_m^\Omega\frac{\partial \xi_j}{\partial x_i})\bm{\chi}(\bm{\xi}_v^r)\hat{\bm{u}}_m(t)^T 
        + a_i\hat{\bm{u}}_m(t) \bm{\chi}(\bm{\xi}_v^r)^T  (J_m^\Omega\frac{\partial \xi_j}{\partial x_i}) \bm{W}\bm{\chi}(\bm{\xi}_v^r)\bm{M}^{-1}\bm{S}_{\xi,j} \bm{\Pi}\bm{\chi}(\bm{\xi}_v^r)\hat{\bm{u}}_m(t)^T
        .
        \end{split}
\end{equation}

\noindent We continue by substituting $\bm{\Pi}^T = \bm{W}\bm{\chi}(\bm{\xi}_v^r)\bm{M}^{-1}, \text{ and } \bm{\Pi}\bm{\chi}(\bm{\xi}_v^r) = \bm{M}^{-1}\bm{M}=\bm{I}$,

\begin{equation}
    \begin{split}
      &  =a_i\hat{\bm{u}}_m(t)\bm{\chi}(\bm{\xi}_v^r)^T\bm{\Pi}^T\bm{S}_{\xi,j} \bm{\Pi} (J_m^\Omega\frac{\partial \xi_j}{\partial x_i}) \bm{\chi}(\bm{\xi}_v^r)\hat{\bm{u}}_m(t)^T
        +a_i\hat{\bm{u}}_m(t)\bm{\chi}(\bm{\xi}_v^r)^T(J_m^\Omega\frac{\partial \xi_j}{\partial x_i}) \bm{\Pi}^T\bm{S}_{\xi,j} \hat{\bm{u}}_m(t)^T\\
      &  =a_i\Big(\bm{\Pi}\bm{\chi}(\bm{\xi}_v^r)\hat{\bm{u}}_m(t)^T\Big)^T
      \bm{S}_{\xi,j} \bm{\Pi} (J_m^\Omega\frac{\partial \xi_j}{\partial x_i}) \bm{\chi}(\bm{\xi}_v^r)\hat{\bm{u}}_m(t)^T
        +a_i\hat{\bm{u}}_m(t)\bm{\chi}(\bm{\xi}_v^r)^T(J_m^\Omega\frac{\partial \xi_j}{\partial x_i}) \bm{\Pi}^T\bm{S}_{\xi,j} \hat{\bm{u}}_m(t)^T.
    \end{split}
\end{equation}

\noindent Lastly, we substitute $\bm{\Pi}\bm{\chi}(\bm{\xi}_v^r) = \bm{M}^{-1}\bm{M}=\bm{I}$ once more and then perform integration-by-parts on the first stiffness matrix to arrive at,

\begin{equation}
\begin{split}
   &= -a_i\hat{\bm{u}}_m(t)
      \bm{S}_{\xi,j}^T \bm{\Pi} (J_m^\Omega\frac{\partial \xi_j}{\partial x_i}) \bm{\chi}(\bm{\xi}_v^r)\hat{\bm{u}}_m(t)^T
        +a_i\hat{\bm{u}}_m(t)\bm{\chi}(\bm{\xi}_v^r)^T(J_m^\Omega\frac{\partial \xi_j}{\partial x_i}) \bm{\Pi}^T\bm{S}_{\xi,j} \hat{\bm{u}}_m(t)^T
        +\sum_{f=1}^{N_f} a_i\hat{\bm{u}}_m(t)\bm{\chi}(\bm{\xi}_{f}^r)^T \diag(\hat{\bm{n}}^{\xi_j})\bm{\chi}(\bm{\xi}_{f})\hat{\bm{u}}_m(t)^T.
\end{split}
\end{equation}

\noindent The two volume terms are the transpose of each other, thus they cancel out and the resultant stability claim is the same as Eq.~(\ref{eq: stab final cond}) in Sec.~\ref{sec:Stability}. $\qed$

\bibliographystyle{model1-num-names}
\bibliography{bibliographie2}

\end{document}